\newcommand{\cf}{\operatorname{cf}}
\newcommand{\Gee }{\mathcal G}
\newcommand{\Ree }{\mathcal R}
\newcommand{\Bee }{\mathcal B}
\newcommand{\Hee }{\mathcal H}
\newcommand{\Wee }{\mathcal W}
\newcommand{\Qee }{\mathcal Q}
\newcommand{\Pee }{\mathcal P}
\newcommand{\Fee }{\mathcal F}
\newcommand{\Vee }{\mathcal V}
\renewcommand{\int}{\operatorname{Int}}
\newtheorem{theorem}{Theorem}
\newtheorem{corollary}[theorem]{Corollary}
\newtheorem{lemma}[theorem]{Lemma}
\author{Piotr Kalemba}
\address{Piotr Kalemba \\
 Institute of Mathematics, University of
Silesia \\
ul. Bankowa 14, 40-007 Katowice}
\email{pkalemba@math.us.edu.pl}
\author{Szymon Plewik}
\address{Szymon Plewik\\Institute of Mathematics,
University of Silesia, ul. Ban\-ko\-wa 14, 40-007 Katowice}
\email{plewik@math.us.edu.pl}
\begin{document}

\title{Ideals which generalize $(v^0)$}
\subjclass[2000]{Primary: 03E05; Secondary: 03E35,  06A06}
\keywords{Base tree; Fusion relation; Trimmed tree;  $add(d^0(\Vee))$; $cov(d^0(\Vee))$     }
%\date{}
\maketitle 

\begin{abstract} We consider ideals $d^0(\Vee)$ which are generalizations of the ideal $(v^0)$. We formulate  couterparts of  Hadamard's theorem. Then,  adopting the base tree theorem and applying  Kulpa-Szymański Theorem, we obtain  $ cov(d^0(\Vee))\leq add(d^0(\Vee))^+$.
\end{abstract}

  \section{Introduction}  
  Let $\omega$ denotes the set of all natural numbers and  $|X|$ denotes the cardinality of a set $X$. If $\Fee$ is a family of sets, then $add(\Fee)$ is the least cardinality of a subfamily $\Gee \subseteq \Fee$ such that the union $\bigcup \Gee$ is not in $\Fee$, but  $cov(\Fee)$ is the least cardinality of a subfamily $\Gee \subseteq \Fee$ such that $\bigcup \Gee = \bigcup \Fee$. Also,  $d^0(\Fee)$ denotes an ideal defined as  follows. A set $X\in d^0(\Fee)$, whenever $X\subset \bigcup \Fee$ and for each $V\in \Fee$ there exists $U \in \Fee$ such that $U \subseteq V$ and $U \cap X =\emptyset$.

 In this note, products $\prod X_i$ are examined  where  $i \in \omega$. In fact, it is assumed that each $X_i$ is a  finite  discrete space  with more than one point.  For infinite $X_i$, we
 make some  comments, only.   Considerations are related to  some trees  which we call  trimmed trees. Each trimmed tree $T[A,\alpha]$ is uniquely determined by two parameters  $A \in [\omega]^\omega$ and $\alpha \in \prod X_i$, and it is a subset of the union $\bigcup \{ X_0\times X_1\times \ldots \times X_n: n\in \omega \}=  \prod_{fin}X_i$.  As usual,  elements of a trimmed tree  $T[A,\alpha] \subseteq \prod_{fin}
X_i$ are called nodes. Unions of nodes which belong to $\prod X_i$ are called branches. The family of all branches of a trimmed tree is  the  perfect subset $[T[A,\alpha ]]\subseteq \prod X_i$. So, our terminology is standard, compare  \cite{jec} or \cite{ker}. 

 We are guided by the  notion $J_f(\alpha)$ which was considered by G. Moran and D. Strauss \cite{ms}.  For a given function $f$ defined on $\prod X_i$ and a fixed point $\alpha\in\prod X_i$ the family  $J_f(\alpha) \subseteq [\omega]^\omega$ consists of all sets $A\in[\omega]^\omega$ such that $f$ is constant on the set of all functions $\beta\in\prod X_i$ such that $\alpha(n)=\beta(n)$ for $n\in \omega \setminus A$. There are conditions sufficient for   some $J_f(\alpha)$ to be non-empty, see \cite{ms} Theorems 1 - 3. In fact, $A\in J_f(\alpha)$ says that the function $f$ is constant on the perfect set $[T[A,\alpha ]]$.  
  
  In \cite{bre}, \cite{kpw} or  \cite{knw}  were considered  properties of the ideal $(v^0)$.   Ideals  $d^0(\{[T]: T\in \Vee\})$ are generalizations of $(v^0)$. In fact,   $(v^0)$  is $d^0(\{[T]: T\in \Vee\})$, where it is  assumed that $X_n =\{0,1\}$ for all $n\in \omega$. Here, we generalize  properties of the ideal $(v^0)$ for  sets $X_n$ being  finite and with more than one point. Albeit, we do not consider properties which depend on the assumption that sets $X_n$ have less than $k$-points for a fixed $k\in \omega$, compare Theorem 3 in \cite{ms}, or that the set  $\{n: |X_n|=k\}$ is finite for each $k\in \omega$, compare Example 1 in \cite{ms}.   We  adopt the base tree theorem, see \cite{bps}  and \cite{bs}.  Applying so called Kulpa-Szymański Theorem, we obtain the final result of this paper, i.e. inequalities  $add(d^0(\Vee))\leq cov(d^0(\Vee))\leq add(d^0(\Vee))^+$ under the hypothesis that  factors $X_i$ of a product $\prod X_i$  are finite and  have more than one point.

\section{Trimmed trees}
Suppose  $X_0, X_1, \ldots$ be an infinite sequence of sets with more than one point. Let $\prod X_i$ denotes the Cartesian product of theses sets. Fix a function $\alpha \in \prod X_i$ and an infinite subset $A \subseteq \omega$.
The subset  $$\{ \beta \in \prod X_i: \alpha|_{\omega \setminus A}= \beta|_{\omega\setminus A} \}
$$ is a perfect subset of $\prod X_i$  equipped with product topology, where each $X_n$ is considered with the  discrete topology. We will denote this subset $[T[A, \alpha]]$, since it can be described as the set of all infinite branches of the tree which we call a \textit{trimmed} tree.  

Any trimmed tree $T[A,\alpha ] \subseteq \prod_{fin}X_i$ is the union of sets of nodes $T[A,\alpha, n]$, where $ n \in \omega$. We assume that the empty set is a node of any $T[A,\alpha]$, too. One  obtains sets of nodes $T[A,\alpha, n]$ by the following  procedure   $T[A,\alpha,n]= \{s \cup  \{(n,\alpha(n))\}: s \in  T[A,\alpha, n-1]\}$, whenever  $n\notin  A$, but   $T[A,\alpha, n]= \{s \cup  \{(n, x)\}: s \in  T[A,\alpha, n-1] \mbox { and } x \in X_n\}$ for $n\in A$. This procedure is inductive, so we assume that  $T[A,\alpha,0] = \{\{(0,\alpha(0))\}\}$, whenever  $0\notin  A$,  and  $T[A,\alpha, 0] = \{\{(0,x)\}: x\in X_0\}$, for  $0\in A$. 
In consequence $T[A,\alpha] = \bigcup \{ T[A,\alpha, n]: n \in \omega\} \cup \{\emptyset \}$ and $
[T[A, \alpha]] = \{ \beta \in \prod X_i: \alpha|_{\omega \setminus A}= \beta|_{\omega\setminus A} \}.
$

  Obviously, $T[A, \alpha]$ does not depend on the restriction $\alpha|_A$. Also,   $T[A, \alpha] = T[B, \beta]$ implies $\alpha|_{\omega \setminus A} = \beta|_{\omega \setminus B}$, what means $A=B$ and $\alpha (n) = \beta (n)$ for  $n \in \omega\setminus A$.

Suppose $\prod X_i$ is fixed. Put   
$$\Vee = \{T[A,\alpha]: A \in [\omega]^\omega \mbox{ and } \alpha \in  \prod   X_i   \}.$$  If $\alpha\in \prod X_i $ and $s\in \prod_{fin} X_i$, then let $\alpha_s$ denotes the unique function $\beta \in \prod X_i$ such that  $s\subset \beta \subseteq \alpha \cup s$. If  $Y\subseteq \prod X_i$ and  $s\in \prod_{fin} X_i$, then $Y_s$ denotes the  set $\{\alpha_{s} : \alpha\in Y\}$, but $Y^*$ denotes the family of all functions $ \alpha_s$, where $ \alpha\in Y$ and $ s\in \prod_{fin} X_i $.   Thus $$Y^*=\bigcup \{Y_s: s \in \prod_{fin} X_i\}.$$ Finally,  put 
$\Vee^* = \{[T]^*: T \in  \Vee  \}$. From now on, we assume that a Cartesian product $\prod X_i$ is fixed where   each  $X_i$ is a set with more than one point.  This  will be  applied to symbols $\Vee$ or $\Vee^*$, always. A few facts will need  that each  set $X_i$ is finite, additionally.

\begin{lemma}\label{continuum} 
Each member  of $\Vee^*$ contains continuum many pairwise disjoint members of $\Vee^*$.
 \end{lemma}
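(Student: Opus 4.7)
The plan is first to unwind the definition of $[T[A,\alpha]]^*$ into a concrete membership criterion: $\beta \in [T[A,\alpha]]^*$ iff $\beta = \gamma_s$ for some $\gamma \in [T[A,\alpha]]$ and some $s \in \prod_{fin} X_i$, which is the same as saying $\beta|_{\omega \setminus A} =^* \alpha|_{\omega \setminus A}$ (agreement off a finite set). More generally, for any $B \in [\omega]^\omega$ and $\beta \in \prod X_i$ we get $[T[B,\beta]]^* = \{\gamma \in \prod X_i : \gamma|_{\omega \setminus B} =^* \beta|_{\omega \setminus B}\}$. This reformulation is the main conceptual step; from it the proof becomes bookkeeping on almost-equality.

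Next, fix $[T[A,\alpha]]^* \in \Vee^*$ and split $A = B \sqcup C$ into two infinite disjoint pieces. For every $\beta \in \prod X_i$ that agrees with $\alpha$ on $\omega \setminus A$, I claim $[T[B,\beta]]^* \subseteq [T[A,\alpha]]^*$: indeed $\omega \setminus A \subseteq \omega \setminus B$, so any $\gamma$ that is almost equal to $\beta$ off $B$ is in particular almost equal to $\beta = \alpha$ off $A$. Two such sets $[T[B,\beta_1]]^*$ and $[T[B,\beta_2]]^*$ meet iff there is a $\gamma$ almost equal to both $\beta_1$ and $\beta_2$ off $B$, which forces $\beta_1 =^* \beta_2$ on $\omega \setminus B$; since $\beta_1$ and $\beta_2$ already coincide on $\omega \setminus A$, this condition reduces to $\beta_1|_C =^* \beta_2|_C$. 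Hence the sets are disjoint whenever $\beta_1$ and $\beta_2$ differ on infinitely many coordinates of $C$.

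It remains to produce a family $\{\beta_\xi : \xi < \mathfrak{c}\}$ of such $\beta$'s that are pairwise $\neq^*$ on $C$. For each $n \in C$ pick distinct points $x_n^0, x_n^1 \in X_n$, available since $|X_n| > 1$. Choose a family $\{Z_\xi : \xi < \mathfrak{c}\} \subseteq \mathcal{P}(C)$ whose elements have pairwise infinite symmetric differences; such a family exists because the relation of finite symmetric difference partitions $\mathcal{P}(C)$ into countable classes, giving $\mathfrak{c}$ classes from which one picks representatives. Then set $\beta_\xi(n) = x_n^{\chi_{Z_\xi}(n)}$ for $n \in C$ and $\beta_\xi(n) = \alpha(n)$ otherwise. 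The restrictions $\beta_\xi|_C$ and $\beta_\eta|_C$ disagree on $Z_\xi \triangle Z_\eta$, an infinite set, so the family $\{[T[B,\beta_\xi]]^* : \xi < \mathfrak{c}\}$ consists of continuum many pairwise disjoint members of $\Vee^*$ contained in $[T[A,\alpha]]^*$.

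The only subtle step is the opening identification of $[T[A,\alpha]]^*$ with the class of functions $=^*$-equivalent to $\alpha$ off $A$; once that is in place, the construction above is essentially forced, and no delicate tree manipulation is required.
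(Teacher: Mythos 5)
Your proof is correct, and it is essentially the paper's argument: both reserve an infinite set of coordinates inside $A$ on which $\alpha$ is modified in continuum many ways that pairwise disagree at infinitely many places lying outside the new trees' free sets, using only $|X_n|\geq 2$. The only cosmetic difference is that you fix a single free set $B$ and vary the function on $C=A\setminus B$ via representatives of $\mathcal{P}(C)/\mathrm{fin}$, whereas the paper varies both the free set $V_C$ and the modified function $\alpha_C$ along an almost disjoint family of subsets of $A$.
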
 
\begin{proof} 
Fix $[T]^*\in\Vee^*$, where $T=T[A,\alpha]$.  Let $\Ree(A)$ be an almost disjoint family of the cardinality continuum consisting of infinite subsets of $A$. Fix a function $ \gamma\in\prod X_i$ such that $\gamma(n)\neq\alpha(n)$, for any $n\in A$.  For each $C\in\Ree(A)$ choose an infinite  subset $V_C\subset C$ such that $C\setminus V_C$ is infinite, too. Let $\alpha_C\in\prod X_i$ be a function such that \[ \alpha_C(n)=
\begin{cases}
\gamma(n), & \mbox{ for }n\in C;\\
\alpha(n),  & \mbox{ for }n\notin C.
\end{cases} \] The family $\{[T[V_C,\alpha_C]]^*: C\in\Ree(A)\}$ is a desired one.
\end{proof}

  If  $T\in\Vee $ and  $s\in \prod_{fin} X_i$, then    $T_s$ denotes the tree $\{\alpha_{s}|n : \alpha\in [T] \mbox{ and } n\in \omega \}$. Note that, notions $T_s$ and  $Y_s$ are used in different contexts. Each tree $T_s$ consists of nodes, but any $Y_s$ consists of infinite sequences. 
 We have assumed that each  $X_n$ has at least two points, hence any tree $ T[A,\alpha]$ has continuum many branches. If $T=T[A,\alpha]$ and $s \in \prod_{fin} X_i$, then $T_s=T[A\setminus |s|, \alpha_s]$. Therefore,  $T\in\Vee $ implies  $T_s\in\Vee $. 

\section{Fusion relations}

Let  $a_0, a_1, \ldots $ and $b_0, b_1, \ldots $ be increasing enumerations of all points of $A=\{a_n: n \in \omega\}\in [\omega ]^\omega$ and $B\in [\omega ]^\omega$, respectively.  Put  $$T[A,\alpha] \subseteq_n T[B,\beta],$$ whenever  $T[A,\alpha] \subseteq T[B,\beta ]$ and  $a_0=b_0$, $a_1=b_1$, $\ldots , a_n=b_n$. Thus, the decreasing  sequence of relations  $(\subseteq_n)_{n\in\omega}$ is defined.  These relations hold between elements of $\Vee$. Always, $\subseteq_{n+1}$ is contained in $ \subseteq_n$.  So, we can apply  the  method of fusion, compare \cite{jec}, using these relation  to trimmed trees. In many papers,  facts about fusion are presented without proof.  Since  details considered here are not so obvious, we run  full proofs here.  

\begin{lemma}\label{fuzja2}
Let   $(T[A_n, \alpha_n])_{n\in\omega}$ be a sequence of elements of $\Vee$.  If always  $T[A_{n+1}, \alpha_{n+1}]\subseteq_n T[A_n, \alpha_n]$, then there exists $C\in [\omega]^\omega$ and $\alpha \in \prod X_i$ such that  $T[C, \alpha]\subseteq T[A_n, \alpha_n]$ for any $n\in\omega$.
\end{lemma}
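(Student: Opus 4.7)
My plan is to extract $C$ by a diagonal stabilization of the increasing enumerations of the $A_n$'s and to build $\alpha$ by the eventual value of $\alpha_n$ outside the shrinking sets.

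First I would record a one-line preliminary characterization of inclusion in $\Vee$: from the branch description $[T[A,\alpha]]=\{\beta:\beta|_{\omega\setminus A}=\alpha|_{\omega\setminus A}\}$ stated before the lemma, one sees that $T[B,\beta]\subseteq T[A,\alpha]$ iff $B\subseteq A$ and $\beta|_{\omega\setminus A}=\alpha|_{\omega\setminus A}$ (if some $m\in B\setminus A$ existed, branches of the first tree would be free at $m$ while branches of the second are pinned to $\alpha(m)$, contradicting $|X_m|>1$). Applied to the hypothesis, we get $A_{n+1}\subseteq A_n$ and $\alpha_{n+1}|_{\omega\setminus A_n}=\alpha_n|_{\omega\setminus A_n}$. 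Together with $\subseteq_n$ we further have $a_i^{(n+1)}=a_i^{(n)}$ for every $i\le n$, where $(a_i^{(n)})_{i\in\omega}$ is the increasing enumeration of $A_n$. A trivial induction on $m\ge k$ gives the stabilization $a_k^{(m)}=a_k^{(k)}$.

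Next I would set $c_k:=a_k^{(k)}$ and $C:=\{c_k:k\in\omega\}$. Since $c_k=a_k^{(k+1)}<a_{k+1}^{(k+1)}=c_{k+1}$, the enumeration is strictly increasing and $C$ is infinite. To see $C\subseteq A_n$ for every $n$, split on $k\ge n$ (then $c_k\in A_k\subseteq A_n$) and on $k<n$ (then $c_k=a_k^{(k)}=a_k^{(n)}\in A_n$ by stabilization). I then define $\alpha$ as follows: for $m\in\omega\setminus\bigcap_n A_n$ let $n(m)$ be the least $n$ with $m\notin A_n$ and put $\alpha(m):=\alpha_{n(m)}(m)$; for $m\in\bigcap_n A_n$ put $\alpha(m):=\alpha_0(m)$ (any value is harmless, since such $m$ will not matter). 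Iterated use of $\alpha_{k+1}|_{\omega\setminus A_k}=\alpha_k|_{\omega\setminus A_k}$ shows that $\alpha_k(m)=\alpha_{n(m)}(m)$ for every $k\ge n(m)$, so this is well-defined and coherent.

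Finally I would verify $T[C,\alpha]\subseteq T[A_n,\alpha_n]$ using the preliminary characterization. The inclusion $C\subseteq A_n$ is already established; for the function part, take $m\in\omega\setminus A_n$, so $n(m)\le n$, and hence $\alpha(m)=\alpha_{n(m)}(m)=\alpha_n(m)$ by the coherence just noted. That concludes the proof.

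I expect the only delicate point to be the bookkeeping at the switch between indices $k<n$ and $k\ge n$ when checking $c_k\in A_n$, which is exactly what the stabilization lemma is designed to handle; once that is set up, the construction of $\alpha$ and the final verification are routine.
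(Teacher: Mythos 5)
Your proof is correct and follows essentially the same route as the paper's: both extract $C$ as the diagonal $\{a_k^{(k)}\}$ of the increasing enumerations (using the $\subseteq_n$ relations to see that these entries stabilize and that $C\subseteq A_n$ for all $n$) and both take $\alpha$ to be an extension of the coherent union $\bigcup_n \alpha_n|_{\omega\setminus A_n}$. Your write-up is if anything slightly more explicit about the stabilization $a_k^{(m)}=a_k^{(k)}$ and the characterization of inclusion, but the argument is the same.
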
 
\begin{proof}       An inclusion  $T[A_{n+1}, \alpha_{n+1}]\subseteq T[A_n, \alpha_n]$ implies $A_{n+1}\subseteq A_n$ and $ \alpha_{n}|_{\omega\setminus A_n}=\alpha_{n+1}|_{\omega\setminus A_n}$ and  $ \alpha_{n}|_{\omega\setminus A_n}=\alpha_{n+1}|_{\omega\setminus A_n}\subseteq \alpha_{n+1}|_{\omega\setminus A_{n+1}}.$ Hence,  the union $\bigcup\{\alpha_n|_{\omega\setminus A_n}: n\in\omega \}$ is a function. Fix $\alpha \in \prod X_i$ which extends this union. Functions $\alpha$ and $\alpha_n$ coincide on the set  $\omega \setminus A_n$, thus  $T[A_{n}, \alpha] = T[A_n, \alpha_n]$.
Let   $a_0^n, a_1^n, \ldots$ be the increasing enumeration of points of $A_n$. Put $C=\{a_n^n: n\in\omega\}$.   If $k\geq n$, then $a^k_k\in A_k\subseteq A_n$. If $k<n$, then   $$ T[A_n, \alpha_n]= T[A_n,\alpha]\subseteq_{n-1} T[A_{n-1},\alpha]\subseteq_{n-2} \ldots  \subseteq_{k}T[A_k,\alpha], $$ hence $ T[A_n, \alpha] \subseteq_{k}T[A_k,\alpha]. $ This implies   $a^k_k\in A_n$. Therefore  $C\subseteq A_n$.  Finally,    $T[C,\alpha]\subseteq T[A_n,\alpha]=T[A_n,\alpha_n]$ for any  $n\in\omega$. 
\end{proof}

From now on, the ideal $d^0(\{[T]: T\in \Vee\})$ will be shortly denoted $d^0(\Vee)$. 

\begin{lemma} \label{5} Let  $s, t\in \prod_{fin} X_i$ and $Y\subseteq \prod X_i$ and assume that  $|X_k|< add (d^0(\Vee))$ for any $k\in \omega$. Then,  $Y_s\in d^0(\Vee)$ if and only, if $Y_t\in d^0(\Vee)$.
\end{lemma}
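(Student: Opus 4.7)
\emph{The plan.} I would first establish a structural lemma reducing the statement to a comparison of ``tails'' of $Y$: for any $s \in \prod_{fin} X_i$ with $n := \dom(s)$ and any $Z \subseteq \prod_{i \geq n} X_i$,
\[
\{s\} \times Z \in d^0(\Vee) \iff Z \in d^0(\Vee^{(n)}),
\]
where $\Vee^{(n)}$ denotes the analogue of $\Vee$ built on the shifted product $\prod_{i \geq n} X_i$. For the direction $(\Rightarrow)$ I would lift $\tilde V = T[B, \delta] \in \Vee^{(n)}$ to $V := T[B, s \cup \delta] \in \Vee$ (so that $[V] = \{s\} \times [\tilde V]$), invoke the hypothesis to obtain $U \subseteq V$ avoiding $\{s\} \times Z$, and project $U$ back to $\Vee^{(n)}$. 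For $(\Leftarrow)$ I would treat $V = T[A, \alpha] \in \Vee$ by dichotomy: either $\alpha$ and $s$ disagree at some $i \in [0,n) \setminus A$ (so $[V] \cap (\{s\} \times Z)$ is already empty), or I would truncate $V$ to $V' := T[A \cap [n,\omega),\, s \cup \alpha|_{[n,\omega)}]$, apply $Z \in d^0(\Vee^{(n)})$ to the projection in $\Vee^{(n)}$, and lift the refinement back to $\Vee$ with the first $n$ coordinates forced to equal $s$. Applied to $Y_s = \{s\} \times Y^{\geq n}$ and $Y_t = \{t\} \times Y^{\geq m}$, with $Y^{\geq k} := \{\alpha|_{[k,\omega)} : \alpha \in Y\}$, this reduces the lemma (WLOG $n \leq m$) to comparing $Y^{\geq n} \in d^0(\Vee^{(n)})$ with $Y^{\geq m} \in d^0(\Vee^{(m)})$.

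\emph{The shift equivalence.} It then remains to show $Y^{\geq k} \in d^0(\Vee^{(k)}) \iff Y^{\geq k+1} \in d^0(\Vee^{(k+1)})$ for every $k$; iterating from $k = n$ up to $k = m-1$ closes the chain. Relabelling indices reduces this to the single statement $Y \in d^0(\Vee) \iff Y^{(1)} \in d^0(\Vee^{(1)})$, where $Y^{(1)} := Y^{\geq 1}$. The direction $(\Leftarrow)$ would project $V = T[A,\alpha]$ to $\tilde V \in \Vee^{(1)}$, apply the hypothesis to find $\tilde U$ avoiding $Y^{(1)}$, and lift with the $0$-th coordinate forced to $\alpha(0)$; no additivity is needed here. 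For $(\Rightarrow)$ I would decompose $Y^{(1)} = \bigcup_{x \in X_0} Y^{(1)}_x$ with $Y^{(1)}_x := \{\alpha|_{[1,\omega)} : \alpha \in Y,\ \alpha(0) = x\}$. Each $\{x\} \times Y^{(1)}_x \subseteq Y$ lies in $d^0(\Vee)$, so the structural lemma places each $Y^{(1)}_x$ in $d^0(\Vee^{(1)})$. Closing the union over $X_0$ requires $|X_0| < add(d^0(\Vee^{(1)}))$, which follows from the hypothesis $|X_0| < add(d^0(\Vee))$ together with the monotonicity $add(d^0(\Vee)) \leq add(d^0(\Vee^{(1)}))$---an immediate consequence of the structural lemma, since any witnessing family $\{Z_i\} \subseteq d^0(\Vee^{(1)})$ pulls back via $\{x\} \times \cdot$ to a family of the same size in $d^0(\Vee)$ with matching unions. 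At level $k$ of the iteration, the analogous chain $|X_k| < add(d^0(\Vee)) \leq add(d^0(\Vee^{(k)}))$ ensures the needed additivity.

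\emph{The main obstacle.} The delicate step is the $(\Rightarrow)$ direction of the one-step shift; it is the unique place where the hypothesis $|X_k| < add(d^0(\Vee))$ is essentially used. It combines the structural lemma (to place each $Y^{(1)}_x$ into $d^0(\Vee^{(1)})$) with monotonicity of $add$ under shifting, so that the union of $|X_0|$ members of $d^0(\Vee^{(1)})$ remains in $d^0(\Vee^{(1)})$. Everything else is careful bookkeeping of trimmed-tree lifts, projections, and truncations.
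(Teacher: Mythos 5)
Your proposal is correct, and it rests on the same two pillars as the paper's own proof: a transfer between stems that needs no additivity, and a fiber decomposition over an initial block of coordinates that is closed up using $|X_k|<add(d^0(\Vee))$. The organization, however, is genuinely different. The paper stays inside the single ideal $d^0(\Vee)$: it first proves the equal-length case $Y_s\in d^0(\Vee)\iff Y_t\in d^0(\Vee)$ directly by re-rooting trees ($T\mapsto T_s$, $P_s\mapsto P_t$), and then pivots through $Y$ itself via the two decompositions $Y\subseteq\bigcup\{Y_u: |u|=|s|\}$ and $Y_s=\bigcup\{(Y_t\cap Y)_s: |t|=|s|\}$, each a union of fewer than $add(d^0(\Vee))$ members of the ideal. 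You instead introduce the shifted ideals $d^0(\Vee^{(n)})$ on tail products, prove the structural equivalence $\{s\}\times Z\in d^0(\Vee)\iff Z\in d^0(\Vee^{(n)})$, and then induct one coordinate at a time; this obliges you to also establish the monotonicity $add(d^0(\Vee))\le add(d^0(\Vee^{(k)}))$, which the paper never needs (your pull-back argument for it via the structural lemma is sound). What your route buys is modularity: the invariance of the ideal under finite shifts, and the comparison of the corresponding additivities, become explicit reusable statements. What the paper's route buys is brevity: one ideal, one ambient space, and the union over $X_0\times\cdots\times X_{|s|-1}$ handled in a single step rather than coordinate by coordinate. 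Both arguments invoke the cardinality hypothesis in exactly one place and for exactly the same purpose, so I see no gap in your plan --- only more bookkeeping.
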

\begin{proof}  Suppose  $|s|=|t|$. Fix $T\in\Vee$. Let  $Y_s\in d^0(\Vee)$. If $t\notin T$, then $[T]\cap Y_t=\emptyset$. If $t\in T$, then choose $P_s\in \Vee$ such that  $[P_s]\cap Y_s=\emptyset$ and $P_s\subseteq T_s$. Hence, $P_t\subseteq T_t\subseteq T$ and $[P_t]\cap Y_t=\emptyset$. Therefore $Y_t\in d^0(\Vee)$.

 Suppose $s\in \prod_{fin} X_i$ and  $Y_s \in  d^0(\Vee)$. Since  $Y$ is a subset of the union $\bigcup\{Y_u: u\in \prod_{fin} X_i\mbox{ and } |u|=|s|\}, $  then it is contained in an union of less than $add (d^0(\Vee))$ many elements  of  $d^0(
\Vee )$. Hence  $Y\in d^0(\Vee )$. 

 Now, suppose that $Y \in d^0(\Vee)$ and  $s\in \prod_{fin} X_i$. If  $t \in \prod_{fin} X_i$  and $|s|=|t|$, then  $\{\alpha\in Y: \alpha|_{|s|}=t\} =Y_t\cap Y=(Y_t\cap Y)_t\subseteq Y.$ Thus  $(Y_t\cap Y)_s \in d^0(\Vee)$.  Also,   $Y_s=\bigcup\{(Y_t\cap Y)_s: t\in \prod_{fin} X_i \mbox{ and }|s|=|t|\}$. Hence $Y_s$ is an union of less than $add (d^0(\Vee))$ many elements  of  $d^0(
\Vee )$. Therefore $Y_s \in d^0(\Vee )$. 
\end{proof}

\begin{lemma}\label{qqq} Suppose that $|X_i| < add (d^0(\Vee))$ for all $i\in \omega$.  Let $k\in\omega$ and $T\in\Vee$. If $Y\in d^0(\Vee)$, then there exists a tree  $P\in\Vee$ such that 
$P\subseteq_k T$ and $[P]\cap Y=\emptyset$.
\end{lemma}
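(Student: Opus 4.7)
The plan is to reduce everything to a single use of the defining property of $d^{0}(\Vee)$. I will first inflate $Y$ to a larger but still $d^{0}(\Vee)$-small set $Y'$ via Lemma~\ref{5}, then apply the $d^{0}(\Vee)$-property once to obtain a tree $U \subseteq T$ whose branches avoid $Y'$, and finally \emph{thicken} $U$ by putting the first $k+1$ branchings of $T$ back in, obtaining the required $P \subseteq_k T$.

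Write $T = T[A,\alpha]$ with $A = \{a_0 < a_1 < \cdots\}$, and let $S := T[A,\alpha,a_k]$ denote the set of nodes of $T$ at level $a_k$. Its cardinality equals $\prod_{i \le k}|X_{a_i}|$, a finite product of cardinals each strictly less than $add(d^{0}(\Vee))$, hence $|S| < add(d^{0}(\Vee))$. Lemma~\ref{5} gives $Y_s \in d^{0}(\Vee)$ for every $s \in S$, so the union
\[
Y' := \bigcup_{s \in S} Y_s
\]
is a union of fewer than $add(d^{0}(\Vee))$ members of $d^{0}(\Vee)$ and thus lies in $d^{0}(\Vee)$. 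Applying the defining property of $d^{0}(\Vee)$ to the tree $T$, I would pick $U = T[C,\beta] \in \Vee$ with $U \subseteq T$ and $[U] \cap Y' = \emptyset$; in particular $C \subseteq A$ and $\beta|_{\omega \setminus A} = \alpha|_{\omega \setminus A}$.

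To thicken, set $C' := C \cup \{a_0,\ldots,a_k\}$, let $\beta'$ agree with $\beta$ on $\omega \setminus C'$ (its values on $C'$ being irrelevant), and put $P := T[C',\beta']$. Because $C' \subseteq A$ with $C' \cap (a_k+1) = \{a_0,\ldots,a_k\}$, the first $k+1$ elements of $C'$ are $a_0,\ldots,a_k$; and since $\omega \setminus A \subseteq \omega \setminus C'$ one has $\beta'|_{\omega \setminus A} = \beta|_{\omega \setminus A} = \alpha|_{\omega \setminus A}$. Thus $P \subseteq_k T$ reads off directly from the definitions.

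The main obstacle is showing $[P] \cap Y = \emptyset$. Given a hypothetical $\eta \in [P] \cap Y$, I would define $\eta' \in \prod X_i$ to agree with $\beta$ on $[0,a_k]$ and with $\eta$ on $(a_k,\infty)$. The condition $\eta \in [P]$ forces $\eta = \beta$ already on $(a_k,\infty) \setminus C$, so $\eta'|_{\omega \setminus C} = \beta|_{\omega \setminus C}$ and therefore $\eta' \in [U]$; the restriction $s' := \eta'|_{[0,a_k]} = \beta|_{[0,a_k]}$ lies in $S$, and one checks that $\eta' = \eta_{s'}$. Since $\eta \in Y$ this exhibits $\eta' \in Y_{s'} \subseteq Y'$, contradicting $[U] \cap Y' = \emptyset$. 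The delicate point is recognising that Lemma~\ref{5} supplies exactly the closure of $d^{0}(\Vee)$ under such coordinate-swaps, which is precisely what allows a single application of the $d^{0}$-property to absorb the loss of control over the first $k+1$ branchings that the thickening step introduces.
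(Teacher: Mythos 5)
Your proof is correct and follows essentially the same route as the paper's: both inflate $Y$ to the union of the shifted copies $Y_s$ over the (fewer than $add(d^0(\Vee))$ many) initial segments up to level $a_k$, apply the $d^0(\Vee)$-property once inside $T$, and then graft the first $k+1$ branchings of $T$ back onto the resulting subtree. Your version merely writes the final tree $P$ directly by its parameters and spells out the verification $[P]\cap Y=\emptyset$ that the paper leaves as a ``check''.
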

\begin{proof} Let  $T=T[A,\alpha]$ and $a_0, a_1,\ldots$ be the increasing enumeration of all points of $A$.  Consider the union 
 $U = \bigcup\{Y_s: s\in X_0\times\ldots\times X_{a_k}\}$. It consists of less than $add (d^0(\Vee))$ many sets, each one from  $d^0(\Vee)$ by Lemma \ref{5}. Thus $U \in d^0(\Vee)$.  Take $Q\in\Vee$ such that  $Q\subseteq T$ and   $[Q]\cap U=\emptyset$. Check that $[Q_s]\cap Y=\emptyset$ for any $s\in X_0\times\ldots\times X_{a_k}$. Finally put   $P=\bigcup \{Q_{s}: s \in T \cap X_0\times\ldots\times X_{a_k}\}$. 
\end{proof}

 We do not know, whether lemmas \ref{fuzja2}, \ref{5} and \ref{qqq} are valid for $\prod X_i = \omega^\omega$. 
  Their proofs would work if $d^0(\Vee)$ was a $\sigma$-ideal.  If all sets $X_i$ are finite, then hypotheses of  these lemmas are fulfilled. Moreover, then $add (d^0(\Vee))$ is an uncountable cardinal.

\begin{theorem}\label{tss}
If each $X_i$ is a finite set, then   $d^0(\Vee)$ is a $\sigma$-ideal. 
\end{theorem}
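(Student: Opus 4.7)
The plan is to prove that $d^0(\Vee)$ is closed under countable unions by a fusion argument that iterates Lemma~\ref{qqq} along the decreasing sequence of relations $\subseteq_n$, and then collapses the resulting sequence via Lemma~\ref{fuzja2}. Since $d^0(\Vee)$ is already known to be an ideal, countable additivity is exactly what needs to be added to obtain a $\sigma$-ideal.

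First I would check that the hypothesis $|X_i|<add(d^0(\Vee))$ required by Lemmas~\ref{5} and~\ref{qqq} holds without circularity. Every ideal is closed under finite unions, so $add(d^0(\Vee))\geq\aleph_0$; since each $X_i$ is finite, $|X_i|<\aleph_0\leq add(d^0(\Vee))$, and this observation does not presuppose the $\sigma$-ideal property we are trying to establish.

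Now I would fix a sequence $(Y_n)_{n\in\omega}$ in $d^0(\Vee)$ and an arbitrary $T\in\Vee$, and construct a sequence $(T_n)_{n\in\omega}$ in $\Vee$ as follows. Set $T_0=T$, and, having defined $T_n$, apply Lemma~\ref{qqq} to the tree $T_n$, the level $n$, and the set $Y_n\in d^0(\Vee)$ to obtain $T_{n+1}\in\Vee$ with $T_{n+1}\subseteq_n T_n$ and $[T_{n+1}]\cap Y_n=\emptyset$. By Lemma~\ref{fuzja2} there exist $C\in[\omega]^\omega$ and $\alpha\in\prod X_i$ such that $P=T[C,\alpha]$ satisfies $P\subseteq T_m$ for every $m$. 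Hence $[P]\cap Y_n=\emptyset$ for every $n$, so $[P]\cap\bigcup_n Y_n=\emptyset$, and $P\subseteq T_0=T$ gives $[P]\subseteq[T]$. Since $T$ was arbitrary, this is precisely the witness the definition of $d^0(\Vee)$ demands for $\bigcup_n Y_n$.

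I expect no substantial obstacle: Lemma~\ref{qqq} has been calibrated exactly for this fusion, and the only delicate point is the bootstrap noted above, which is mild because finiteness of each $X_i$ immediately outruns $\aleph_0$. The bookkeeping in the fusion itself is entirely routine once Lemmas~\ref{qqq} and~\ref{fuzja2} are in hand, so the content of the theorem is really the assertion that these two lemmas fit together in the natural way.
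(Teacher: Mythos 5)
Your proof is correct and follows essentially the same route as the paper: iterate Lemma~\ref{qqq} to build a $\subseteq_n$-decreasing sequence of trees avoiding the successive sets, then fuse via Lemma~\ref{fuzja2}. Your explicit check that the hypothesis $|X_i|<add(d^0(\Vee))$ holds non-circularly (finiteness of $X_i$ plus closure of the ideal under finite unions) is a worthwhile clarification of a point the paper leaves implicit.
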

\begin{proof} Assume $S_0, S_1,\ldots$ is an increasing sequence of elements of the ideal $d^0(\Vee)$.  Fix $T\in\Vee$ and put $T_0=T$. Using Lemma \ref{qqq},  choose inductively  trees $T_k\in\Vee$ such that $T_{k+1}\subseteq_{k}T_{k}$ and $[T_{k}]\cap S_k=\emptyset$.  Thus, it has been defined a sequence of elements of $\Vee$ satisfying hypotheses of  Lemma \ref{fuzja2}. So, there exists a tree $P\in\Vee$ such that $P\subseteq T$ and $[P]\cap \bigcup_{k\in\omega}S_k=\emptyset$. 
\end{proof}

\begin{corollary}\label{gwiazdka} If each $X_i$ is a finite set and $Y\in d^0(\Vee)$,  then  $Y^*\in d^0(\Vee)$.
\end{corollary}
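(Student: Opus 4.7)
My plan is to derive this as a direct consequence of Lemma~\ref{5} together with Theorem~\ref{tss}. The key observation is that, because each factor $X_i$ is finite, the set $\prod_{fin} X_i = \bigcup_n (X_0 \times \cdots \times X_n)$ is a countable union of finite sets, hence countable. Therefore the defining union
\[
Y^* = \bigcup \{Y_s : s \in \prod_{fin} X_i\}
\]
is in fact a \emph{countable} union.

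Next I would verify that each summand $Y_s$ is itself in the ideal. For this I invoke Lemma~\ref{5}, whose hypothesis $|X_i| < add(d^0(\Vee))$ holds in our setting: each $X_i$ is finite, and by Theorem~\ref{tss} the ideal $d^0(\Vee)$ is a $\sigma$-ideal containing the singletons (for any single branch one can trim any $V \in \Vee$ to avoid it), so $add(d^0(\Vee)) \geq \aleph_1$. The second half of Lemma~\ref{5} states precisely that $Y \in d^0(\Vee)$ implies $Y_s \in d^0(\Vee)$ for every $s \in \prod_{fin} X_i$.

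Finally, applying Theorem~\ref{tss} once more, a countable union of members of the $\sigma$-ideal $d^0(\Vee)$ again belongs to $d^0(\Vee)$; thus $Y^* \in d^0(\Vee)$, as desired. There is no real obstacle here: the statement is essentially a packaging of Lemma~\ref{5} (which distributes membership in the ideal across the ``shifted'' copies $Y_s$) with the countability of $\prod_{fin} X_i$ in the finite-factor case and the $\sigma$-ideal property. The only mildly delicate point is ensuring $add(d^0(\Vee))$ is uncountable so that Lemma~\ref{5} is applicable, which is itself remarked on by the authors immediately after Theorem~\ref{tss}.
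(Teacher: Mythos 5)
Your proposal is correct and follows the paper's own argument essentially verbatim: Lemma~\ref{5} gives $Y_s\in d^0(\Vee)$ for each $s$, the finiteness of the $X_i$ makes $\prod_{fin}X_i$ countable so $Y^*$ is a countable union, and Theorem~\ref{tss} closes the ideal under such unions. The extra remark justifying the applicability of Lemma~\ref{5} (that $add(d^0(\Vee))$ exceeds each finite $|X_k|$) is exactly the point the authors note after Theorem~\ref{tss}.
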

\begin{proof} By Lemma \ref{5}, $Y_s \in d^0(\Vee)$ for each $s\in \prod_{fin} X_i$. Since  $Y^*= \bigcup\{Y_s: s\in \prod_{fin} X_i \}$, it is a countable union of elements of $d^0(\Vee)$. Thus    $Y^*\in d^0(\Vee)$ by   Theorem \ref{tss}.
\end{proof}

\section{Counterparts of Hadamard's theorem}

Two  sequences of countable sets $(a_n)_{n\in \omega}$ and  $(b_n)_{n\in \omega}$ form a $(\omega,\omega)$-gap, whenever  $$a_0 \subset^* a_{1} \subset^* \ldots a_n \subset^*\ldots  \subset^* b_{n} \subset^*\ldots \subset^* b_1\subset^* b_0$$ and no set $c$
fulfills $ a_n \subset^* c \subset^* b_{n} $ for all $n \in \omega$. The famous Hadamard's theorem says that there are no  $(\omega,\omega)$-gaps, compare \cite{had} or \cite{sch}.  This theorem can be formulated in our's notations. Indeed, assume that always $X_i = \{0,1\}$ and identify each subset $Y\subseteq  \omega$ with its characteristic function which belongs to $\prod X_i$. Then one can check that Hadamard's theorem is equivalent to the property that any decreasing  sequence of  elements of $\Vee^*$ has a lower bound. Theorem \ref{*fuzja} extends this property.

If  $T = T[A,\alpha] \in \Vee$, then the tree $T$ is determined by the function   
\[ \delta(T[A,\alpha])(n) = \begin{cases}\{\alpha(n)\},& \mbox{ whenever }  n\notin A;\\
X_n,& \mbox{ whenever } n\in A. \end{cases} \] 
For a tree $P\in\Vee$, we have $\beta \in[P]$  if and only, if $  \beta(n) \in \delta(P)(n)$ for each $n\in \omega$.  Also, $\beta\in [P]^*$ if and only, if $  \beta(n) \in \delta(P)(n)$ for all, but finitely many $n\in \omega$. 
One can check that, if $[P]^*, [T]^*\in\Vee^*$, then $[P]^*\subseteq [T]^*$ if and only, if  $\delta(P)(k)\subseteq\delta(T)(k)$ for all,  but finitely many $k\in \omega$.    
 
\begin{lemma}\label{fakt}
If $P, T\in \Vee$ and $[P]^*\subseteq [T]^*$, then for each $n\in\omega$ there exists a tree  $Q\in \Vee$ such that $Q\subseteq_n T$ and $[Q]^*=[P]^*$. 
\end{lemma}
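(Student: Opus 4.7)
The plan is to leverage the characterization stated just before the lemma: for any $P', T' \in \Vee$, we have $[P']^* \subseteq [T']^*$ iff $\delta(P')(k) \subseteq \delta(T')(k)$ for cofinitely many $k$, and consequently $[Q]^* = [P]^*$ is equivalent to $\delta(Q)(k) = \delta(P)(k)$ for cofinitely many $k$. Writing $T = T[A,\alpha]$ with $A = \{a_0 < a_1 < \ldots\}$ and $P = T[B,\beta]$, I would extract some $m \in \omega$ with $\delta(P)(k) \subseteq \delta(T)(k)$ for $k \geq m$. Since each $|X_k| \geq 2$, this forces $B \setminus m \subseteq A$ (so $B \cap A$ is infinite) and $\beta(k) = \alpha(k)$ for every $k \geq m$ outside $A$.

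Then I would explicitly construct $Q = T[C, \gamma]$ by setting
\[
C = \{a_0, \ldots, a_n\} \cup \bigl((B \cap A) \cap (a_n, \infty)\bigr),
\]
an infinite subset of $A$ whose $i$-th element in increasing order is $a_i$ for every $i \leq n$. I would define $\gamma \in \prod X_i$ to agree with $\alpha$ on $\omega \setminus A$, with $\beta$ on $A \setminus B$, and arbitrarily on $A \cap B$ (the last values do not affect $T[C, \gamma]$). Since $C \subseteq A$ and $\gamma|_{\omega \setminus A} = \alpha|_{\omega \setminus A}$, one gets $Q \subseteq T$, and the agreement of the initial segments of the enumerations upgrades this to $Q \subseteq_n T$.

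To finish I would verify $\delta(Q)(k) = \delta(P)(k)$ for all sufficiently large $k$ by a three-case inspection: if $k \in C$ then $k \in B \cap A$, so both values equal $X_k$; if $k \in A \setminus C$ then $k \notin B$ and $\delta(Q)(k) = \{\gamma(k)\} = \{\beta(k)\} = \delta(P)(k)$ by the choice of $\gamma$; and if $k \notin A$ with $k \geq m$ then $k \notin B$ and $\gamma(k) = \alpha(k) = \beta(k)$, so again the singletons coincide.

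The main obstacle is simply the bookkeeping: one must choose $C$ so that its first $n+1$ elements match those of $A$ while $C$ still differs from $B$ only in a finite way. The key observation making this painless is that all discrepancies forced by $Q \subseteq_n T$ can be absorbed into a finite prefix, while the hypothesis $[P]^* \subseteq [T]^*$ is itself only a cofinite condition, so finite alterations in the definition of $C$ and $\gamma$ cannot spoil the equality $[Q]^* = [P]^*$.
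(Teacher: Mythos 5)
Your proof is correct and follows essentially the same strategy as the paper: splice an initial segment on which $Q$ copies $T$ (guaranteeing $Q\subseteq_n T$) with a tail on which $Q$ copies $P$ (guaranteeing $[Q]^*=[P]^*$), the paper doing this directly via the $\delta$-function while you unfold it into the $(C,\gamma)$ parameters. The extra bookkeeping you carry out ($B\setminus m\subseteq A$, $\beta=\alpha$ off $A$ beyond $m$, and the three-case check) is exactly what the paper leaves implicit, and it is all accurate.
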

\begin{proof}
Fix $k_0\in \omega$ such that $\delta(P)( k)\subseteq \delta(T)( k)$,  for any $k\geq k_0$. If $T=T[A,\alpha]$, then let $a_n$ be the   $n$-th element of  $A$. 
Put \[\delta(Q)( m)= \begin{cases} \delta(T)(m), & \mbox{ for }   m\leq \max\{a_n, k_0\}, \\ \delta(P)(m), & \mbox{ for }  m>\max\{a_n, k_0\}. \end{cases}\] The function $\delta(Q)$ uniquely determines the tree 
 $Q \in \Vee$ which is a desired one.
\end{proof}

\begin{theorem}\label{*fuzja} Let $(W_n)_{n\in \omega}$ be a sequence of  elements of $\Vee^*$. 
If  $ W_{n+1}\subseteq W_n $ for any $n\in \omega$,  then there exists  $W\in\Vee^*$ such that $W\subseteq W_n$, for any $n\in \omega$.
\end{theorem}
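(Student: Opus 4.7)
The plan is to represent each $W_n$ as $[T_n]^*$ for some $T_n\in\Vee$, then upgrade the $\subseteq$-decreasing sequence $(W_n)$ in $\Vee^*$ to a genuine fusion sequence $(P_n)$ in $\Vee$ satisfying $P_{n+1}\subseteq_n P_n$, and finally invoke Lemma \ref{fuzja2}. The bridge between the two pictures is Lemma \ref{fakt}, which is designed precisely to convert a $*$-comparison $[P]^*\subseteq[T]^*$ into a fusion-compatible inclusion $Q\subseteq_n T$ without altering the $*$-envelope, i.e.\ with $[Q]^*=[P]^*$.

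Concretely, fix trees $T_n\in\Vee$ with $W_n=[T_n]^*$ and set $P_0:=T_0$. Assume inductively that $P_n\in\Vee$ has been chosen with $[P_n]^*=W_n$. From $W_{n+1}\subseteq W_n$ we read off $[T_{n+1}]^*\subseteq[P_n]^*$, so applying Lemma \ref{fakt} with $T_{n+1}$ in the role of $P$ and $P_n$ in the role of $T$ yields $P_{n+1}\in\Vee$ with $P_{n+1}\subseteq_n P_n$ and $[P_{n+1}]^*=[T_{n+1}]^*=W_{n+1}$. The sequence $(P_n)_{n\in\omega}$ then satisfies the hypothesis of Lemma \ref{fuzja2}.

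That lemma supplies $C\in[\omega]^\omega$ and $\alpha\in\prod X_i$ with $T[C,\alpha]\subseteq P_n$ for every $n$. Passing to branches and then to $*$-envelopes is monotone, so $[T[C,\alpha]]^*\subseteq[P_n]^*=W_n$ for every $n\in\omega$, and $W:=[T[C,\alpha]]^*\in\Vee^*$ is the desired lower bound.

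The only point that demands care is bookkeeping across the three distinct relations in play: ordinary inclusion of trees, the refinement $\subseteq_n$ of trees, and inclusion of the $*$-envelopes of branch spaces. Once the inductive step is routed through Lemma \ref{fakt}, however, no real obstacle remains: the lemma was built exactly to permit this conversion, and the fusion lemma then closes the argument.
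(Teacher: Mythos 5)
Your proof is correct and follows essentially the same route as the paper's: represent $W_n=[T_n]^*$, use Lemma \ref{fakt} to build a fusion sequence $P_{n+1}\subseteq_n P_n$ with $[P_n]^*=W_n$, and close with Lemma \ref{fuzja2}. No further comment is needed.
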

\begin{proof} Choose $T_n \in \Vee$ such that  $W_n=[T_n]^*$.  
Inductively,  construct  a sequence of trees $(Q_n)_{n\in\omega}$ such that  $Q_{n+1}\subseteq_n Q_n \in \Vee$  and $[Q_n]^*=[T_n]^*$, using  Lemma \ref{fakt}.  By Lemma \ref{fuzja2}, there exists  $T\in\Vee$ such that  $T \subseteq Q_n$ and   $W= [T]^*\subseteq [Q_n]^* = [T_n]^*=W_n$,  for all  $n\in \omega$.
\end{proof}

Note that, Lemma \ref{fakt} and Theorem \ref{*fuzja} do not require that sets $X_i$ are finite. Theorem \ref{*fuzja} immediately follows that  $d^0(\Vee^*)$ is a $\sigma$-ideal.

\begin{corollary}\label{rownosc} If each $X_i$ is a finite set, then $d^0(\Vee)=d^0(\Vee^*).$
\end{corollary}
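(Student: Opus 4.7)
The plan is to prove the two inclusions $d^0(\Vee)\subseteq d^0(\Vee^*)$ and $d^0(\Vee^*)\subseteq d^0(\Vee)$ separately. Both families have the same underlying union $\prod X_i$, so only the condition involving $V,U$ in the definition of $d^0$ needs attention. Lemma \ref{fakt} provides the tool for one direction, and Corollary \ref{gwiazdka} together with a simple duality between the operations $X\mapsto X^*$ and $[P]\mapsto [P]^*$ handles the other.

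For $d^0(\Vee^*)\subseteq d^0(\Vee)$, I would fix $X\in d^0(\Vee^*)$ and an arbitrary $T\in\Vee$, apply the defining property of $d^0(\Vee^*)$ to $[T]^*\in\Vee^*$ to produce $P\in\Vee$ with $[P]^*\subseteq[T]^*$ and $[P]^*\cap X=\emptyset$, and then invoke Lemma \ref{fakt} to obtain $Q\in\Vee$ with $Q\subseteq_0 T$ (so in particular $Q\subseteq T$, hence $[Q]\subseteq[T]$) and $[Q]^*=[P]^*$. Since $[Q]\subseteq[Q]^*=[P]^*$, we get $[Q]\cap X=\emptyset$, which verifies $X\in d^0(\Vee)$.

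For the reverse inclusion $d^0(\Vee)\subseteq d^0(\Vee^*)$, the key step is the following equivalence, valid for every $P\in\Vee$ and every $X\subseteq\prod X_i$:
\[
[P]^*\cap X=\emptyset \iff [P]\cap X^*=\emptyset.
\]
This is the only nontrivial point, and it follows by unpacking: if $\alpha\in[P]^*\cap X$, then $\alpha_s\in[P]$ for some $s\in\prod_{fin}X_i$, and $\alpha_s\in X^*$ because $\alpha\in X$; conversely, if $\beta=\alpha_s\in[P]\cap X^*$ with $\alpha\in X$, then modifying $\alpha_s\in[P]$ back on finitely many coordinates puts $\alpha$ into $[P]^*$, while $\alpha\in X$. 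Granting this, I would take $X\in d^0(\Vee)$, apply Corollary \ref{gwiazdka} to conclude $X^*\in d^0(\Vee)$, and then for each $T\in\Vee$ choose $P\in\Vee$ with $[P]\subseteq[T]$ and $[P]\cap X^*=\emptyset$; the equivalence above yields $[P]^*\cap X=\emptyset$, and $[P]^*\subseteq[T]^*$ is immediate, so $X\in d^0(\Vee^*)$.

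The main obstacle, as mentioned, is the equivalence $[P]^*\cap X=\emptyset\iff[P]\cap X^*=\emptyset$; it is short once one keeps track of which side the finite-modification operation is being applied to, but it is precisely here that the symmetry between $\Vee$ and $\Vee^*$ is exploited. Everything else reduces to bookkeeping using Lemma \ref{fakt} and Corollary \ref{gwiazdka}, both of which already require the finiteness of the $X_i$ through their reliance on $d^0(\Vee)$ being a $\sigma$-ideal.
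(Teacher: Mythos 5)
Your proposal is correct and follows essentially the same route as the paper: Lemma \ref{fakt} for the inclusion $d^0(\Vee^*)\subseteq d^0(\Vee)$, and Corollary \ref{gwiazdka} applied to $X^*$ for the converse, with your equivalence $[P]^*\cap X=\emptyset\iff[P]\cap X^*=\emptyset$ being just a cleaner packaging of the paper's chain $[P]\cap Y^*=\emptyset\Rightarrow[P]^*\cap Y^*=\emptyset\Rightarrow[P]^*\cap Y=\emptyset$. (Only a cosmetic quibble: Lemma \ref{fakt} itself does not depend on finiteness of the $X_i$; it is Corollary \ref{gwiazdka} that does.)
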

\begin{proof}
 If $Y\in d^0(\Vee)$ and $[T]^*\in\Vee^*$, then  $Y^*\in d^0(\Vee)$ by Corollary  \ref{gwiazdka}. There exists a tree $P\subseteq T$ such that $[P]\cap Y^*=\emptyset$. Hence $[P]^*\cap Y^*=\emptyset$ and $[P]^*\cap Y=\emptyset$ and finally  $Y\in d^0(\Vee^*)$. 
 
 If  
 $Y\in d^0(\Vee^*)$ and $T\in\Vee$, then there exists $[P]^*\subseteq [T]^*$ such that $[P]^*\cap Y=\emptyset$. Therefore $[P]\cap Y=\emptyset$. By Lemma \ref{fakt} one can assume that $P\subseteq T$,  so  $Y\in d^0(\Vee)$.
\end{proof}

Families   $\Vee$ and $\Vee^*$ are not isomorphic with respect to the inclusion. 
  Any decreasing sequence  of elements of  $\Vee^*$ has a lower bound, see Theorem \ref{*fuzja}. But a sequence of trees  $(T[\omega \setminus n ,\alpha])_{n \in \omega}$ has no  lower bound in $\Vee$.
\section{A version of base tree}
Base Matrix Lemma, see \cite{bps}, or Base Matrix Tree, compare \cite{bs} or \cite{kpw},  are adopted to trimmed trees in this part. We omit some proofs, since they are completely analogical to these which are in \cite{bps}, \cite{bs}, \cite{kpw}, etc. From now on, assme that all $X_i$ are countable.
Elements $[Q]^*, [P]^* \in\Vee^*$ are \textit{incompatible} whenever the intersection $[P]^* \cap [Q]^*$ contains no element of   $\Vee^*$. Thus, if $[Q]^*$ and $[P]^*$ are incompatible, then $[P]^* \cap [Q]^*$ is countable. But, if  $[P]^* \cap [Q]^*$ is uncountable, then $[P]^*\cap [T]^*\in\Vee^*.$ Indeed, 
the intersection  $[P]^* \cap [Q]^*$ is a countable union of closed sets  $[P_s]\cap[Q_t]$, where $s,t \in \prod_{fin} X_i.$  Therefore, some $[P_s]\cap[Q_t]$ has to be uncountable and hence  $P_s \cap Q_t$ is a trimmed tree, moreover $[P_s\cap Q_t]^* = [P]^* \cap[Q]^*$.

 A family  $\Pee $ is called  $v$-\textit{partition}, whenever $\Pee$ is a maximal family, with respect to the inclusion, of pairwise incompatible elements of  $\Vee^*$. Any collection of  $v$-partitions is called  $v$-\textit{matrix}. We say that a $v$-partition $\Pee$ refines a $v$-partition $\Qee$ (briefly $\Pee\prec \Qee$), if  for every $[P]^*\in\Pee$ there exists  $[Q]^*\in\Qee$ such that $[P]^*\subseteq [Q]^*$. A matrix $\Hee$ is called \textit{shattering} if for any $[T]^*\in\Vee^*$ there exists a $v$-partition $\Pee \in \Hee$  such that at least two elements of $\Pee$ are compatible with $[T]^*$. The least cardinality of a shattering matrix we denote  $\kappa(\prod X_i)$.

The poset ($\Vee^*$, $\subseteq$) is separative, i.e.  if $[P]^*$ is not contained in $ [T]^*$, then there exists $[Q]^*\in\Vee^*$ such that $[Q]^*\subseteq [P]^*$ and $[Q]^*$ is incompatible with $[T]^*$. Indeed, if  $[P]^*\setminus [T]^*\neq\emptyset$, then the  set  $Z=\{n\in\omega: \delta(P)(n)\setminus \delta(T)(n)\neq\emptyset\}$ is infinite. Fix a set $N\in[Z]^\omega$ such that there exist infinitely many  $n\in\omega\setminus N$ for, which $\delta(P)(n)=X_n$. Fix also, a function $\alpha$ with domain $N$ such that  $\alpha(n)\in\delta(P)(n)\setminus \delta(T)(n)$ for $n\in N$. Put
\[ \delta(Q)(n)=
\begin{cases}
 \delta(P)(n), & \mbox{ for }n\notin N; \\
 \{\alpha(n)\}, & \mbox{ for }n\in N.
\end{cases}  
\] The function $\delta(Q)$ uniquely determines the tree 
 $Q \in \Vee$ which is a disered one.

For any tree $T\in\Vee$ a poset $$(\{[P]^*\in\Vee^*: [P]^*\subseteq [T]^*\}, \subseteq)$$ is isomorphic with the poset $(\Vee^*, \subseteq)$. Moreover, posets  $$(\{P\in\Vee: P\subseteq T\}, \subseteq) \mbox{  and } (\Vee, \subseteq)$$  are isomorphic, too. Additionally, if  $T=T[A,\alpha]$, then each bijection $\phi:A \to\omega$ determines these isomorphisms.  Indeed, for $T[B,\beta]\subseteq T[A,\alpha]$, put $\Phi(T[B,\beta])=T[\phi(B),\beta\circ\phi^{-1}].$  If $T[C,\gamma]\in \Vee$, then put
 $\Psi(T[C,\gamma])=T[\phi^{-1}(C),\gamma\circ\phi \cup \alpha|_{\omega \setminus A}]\subseteq T[A,\alpha]$.
We have $$\Phi\circ\Psi(T[C,\gamma])=T[C,\gamma] \mbox{ and } \Psi\circ\Phi(T[B,\beta])=T[B,\beta].$$ Thus  $\Phi$ and $\Psi$ are mutually inverse bijections. Moreover, $\Phi$ and $\Psi$ preserve the relation of inclusion between trees, hence they are isomorphisms.
Now, we can define  isomorphism $\Phi^*$ of posets $$(\{[P]^*\in\Vee^*: [P]^*\subseteq [T]^*\}, \subseteq) \mbox{  and } (\Vee^*, \subseteq)$$  as follows $\Phi^*([P]^*)=[\Phi(P)]^*$,  for any $[P]^*\subseteq [T]^*$  and $[P]^*\in\Vee^*.$

The next lemma is a counterpart of the lemma  2.6 in   \cite{bps}.
\begin{lemma}\label{EEE} If $\Hee$ is a $v$-matrix of the cardinality less than $\kappa (\prod X_i)$, then there exists a  $v$-partition $\Pee$ which refines each $v$-partition  $\Qee \in \Hee$. 
\end{lemma}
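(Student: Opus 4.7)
The plan is to establish that the set
\[ D = \{[P]^* \in \Vee^* : \text{for every } \Qee \in \Hee, \, [P]^* \subseteq [Q]^* \text{ for some } [Q]^* \in \Qee\} \]
is dense in $(\Vee^*,\subseteq)$, and then to take $\Pee$ to be a maximal antichain of $D$ supplied by Kuratowski-Zorn.

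To prove density, I would fix $[T]^* \in \Vee^*$ and look for $[P]^* \subseteq [T]^*$ lying in $D$. For each $\Qee \in \Hee$, I would first verify that the restriction
\[\Qee^T := \{[Q]^* \cap [T]^* : [Q]^* \in \Qee, \, [Q]^* \cap [T]^* \text{ uncountable}\}\]
is a $v$-partition of the lower section $\Vee^*_T := \{[R]^* \in \Vee^* : [R]^* \subseteq [T]^*\}$, using the observation preceding this lemma that every uncountable $[A]^* \cap [B]^*$ contains an element of $\Vee^*$ together with maximality of $\Qee$. Transporting $\{\Qee^T : \Qee \in \Hee\}$ along the isomorphism $\Phi^*$ constructed just before this lemma produces a $v$-matrix of $\Vee^*$ of cardinality at most $|\Hee| < \kappa(\prod X_i)$, which is therefore not shattering. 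The witnessing element pulls back to some $[T_0]^* \in \Vee^*_T$ with the property that each $\Qee^T$ has at most one element compatible with $[T_0]^*$; maximality of $\Qee^T$ promotes this to exactly one, call it $[Q_\Qee^T]^*$. Separativity of $(\Vee^*,\subseteq)$, proved just before this lemma, then forces $[T_0]^* \subseteq [Q_\Qee^T]^*$: otherwise one could cut off a piece of $[T_0]^*$ incompatible with $[Q_\Qee^T]^*$, and maximality of $\Qee^T$ would produce a second element of $\Qee^T$ compatible with that piece and hence with $[T_0]^*$, a contradiction. Thus $[T_0]^*$ is contained in some element of each $\Qee \in \Hee$, so $[T_0]^* \in D$ and density is established.

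Finally, Kuratowski-Zorn inside $(D,\subseteq)$ provides a maximal antichain $\Pee \subseteq D$, and the density of $D$ upgrades this to a maximal antichain of $\Vee^*$ itself: any $[R]^* \in \Vee^*$ incompatible with every element of $\Pee$ would admit, by density, a refinement $[R']^* \subseteq [R]^*$ with $[R']^* \in D$, still incompatible with every element of $\Pee$, contradicting maximality inside $D$. Hence $\Pee$ is a $v$-partition, and by its defining membership in $D$ it refines every $\Qee \in \Hee$. I expect the main obstacle to be the careful bookkeeping that $\Qee^T$ really is a $v$-partition of the section below $[T]^*$ and that $\Phi^*$ transports $v$-partitions to $v$-partitions, since this is precisely where the cardinal hypothesis $|\Hee| < \kappa(\prod X_i)$ is leveraged against the definition of a shattering matrix.
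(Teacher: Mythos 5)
Your argument is correct and is precisely the one the paper intends: the paper's proof simply cites Lemma 2.6 of Balcar--Pelant--Simon, invoking the isomorphism of $(\{[Q]^*\in\Vee^*: [Q]^*\subseteq [T]^*\},\subseteq)$ with $(\Vee^*,\subseteq)$ and separativity, and your write-up carries out exactly that density-plus-maximal-antichain argument in detail, using those same two ingredients together with the definition of $\kappa(\prod X_i)$ as the least cardinality of a shattering matrix. No gaps; you have just made explicit what the paper leaves to the reference.
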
 
\begin{proof} Orders $(\{[Q]^*\in\Vee^*: [Q]^*\subseteq [T]^* \}, \subseteq)$ and $(\Vee^*, \subseteq)$ are isomorphic and separative. So,   one obtains a proof  analogous to the proof of the lemma  2.6 in   \cite{bps}. 
\end{proof}

\begin{theorem} \label{tco3} $ \kappa(\prod X_i)$ is a regular  uncountable cardinal number.
\end{theorem}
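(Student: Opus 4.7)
The plan is to prove uncountability and regularity separately, in both cases by showing that a sufficiently small $v$-matrix cannot be shattering, through an explicit construction of some $[T]^*\in\Vee^*$ compatible with at most one element of each of its partitions. Theorem \ref{*fuzja} is the engine of the countable case, and Lemma \ref{EEE} reduces the singular case to a case of smaller cardinality.

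For uncountability, I would enumerate any countable $v$-matrix as $\{\Pee_n:n\in\omega\}$ and recursively choose a decreasing sequence $[T_0]^*\supseteq [T_1]^*\supseteq\ldots$ in $\Vee^*$ together with $[P_n]^*\in\Pee_n$ so that $[T_{n+1}]^*\subseteq [T_n]^*\cap [P_n]^*$. The induction step uses maximality of $\Pee_n$: some $[P_n]^*\in\Pee_n$ must be compatible with $[T_n]^*$, and the resulting intersection contains an element of $\Vee^*$ serving as $[T_{n+1}]^*$. Theorem \ref{*fuzja} produces $[T]^*\in\Vee^*$ below every $[T_n]^*$, in particular below every $[P_n]^*$. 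If a second $[Q]^*\in\Pee_n$, distinct from $[P_n]^*$, were compatible with $[T]^*$, their common lower bound would also lie below $[P_n]^*$, contradicting pairwise incompatibility inside $\Pee_n$. Hence no countable matrix is shattering.

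For regularity, suppose toward a contradiction that $\kappa=\kappa(\prod X_i)$ has cofinality $\lambda<\kappa$, and fix a shattering matrix $\Hee$ of cardinality $\kappa$. Write $\Hee=\bigcup_{\alpha<\lambda}\Hee_\alpha$ with $|\Hee_\alpha|<\kappa$. By Lemma \ref{EEE} each $\Hee_\alpha$ admits a common refinement $\Pee_\alpha$; the family $\{\Pee_\alpha:\alpha<\lambda\}$ is then a $v$-matrix of cardinality at most $\lambda<\kappa$, so by the very definition of $\kappa$ it fails to be shattering. Hence there exists $[T]^*\in\Vee^*$ compatible with at most one member of each $\Pee_\alpha$.

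The final and most subtle step is to transfer this non-shatteringness back up through the refinement, and I expect it to carry all the technical weight. Fix $\Qee\in\Hee_\alpha$ and assume, for contradiction, that $[T]^*$ were compatible with two distinct $[Q_1]^*,[Q_2]^*\in\Qee$, witnessed by $[R_i]^*\subseteq [T]^*\cap [Q_i]^*$ in $\Vee^*$. Maximality of $\Pee_\alpha$ supplies a $[P_i]^*\in\Pee_\alpha$ compatible with $[R_i]^*$, and hence with $[T]^*$; the uniqueness assumption then collapses $[P_1]^*=[P_2]^*=[P]^*$. A common lower bound of $[P]^*$ and $[R_i]^*$ also sits below $[Q_i]^*$, so $[P]^*$ is compatible with both $[Q_1]^*$ and $[Q_2]^*$. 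But $\Pee_\alpha\prec\Qee$ singles out a unique $[Q]^*\in\Qee$ containing $[P]^*$, and pairwise incompatibility in $\Qee$ forces $[Q_1]^*=[Q_2]^*=[Q]^*$, a contradiction. Consequently $[T]^*$ is compatible with at most one element of every $\Qee\in\Hee$, contradicting the shatteringness of $\Hee$, so $\kappa$ must be regular.
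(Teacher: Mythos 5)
Your proof is correct and follows exactly the route the paper intends: the paper's own ``proof'' is merely a citation to Balcar--Pelant--Simon (Corollaries 2.8 and 2.9), and your argument is a faithful write-out of that standard scheme, with Theorem \ref{*fuzja} giving $\sigma$-closedness for uncountability and Lemma \ref{EEE} plus the refinement-transfer argument giving regularity. Nothing is missing; you have in fact supplied more detail than the paper does.
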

\begin{proof} Using Theorem \ref{*fuzja}, one can proceed analogous as in  \cite{bps} (corollaries   2.8 and 2.9) or  as in  \cite{bs} (Proposition 3.3). 
\end{proof}

\begin{theorem} \label{12} 
There exists a $v$-matrix $\Hee =\{ \Pee_\alpha: \alpha < \kappa(\prod X_i) \} $ such that if $\alpha<\beta<\kappa(\prod X_i)$, then $\Pee_\beta\prec\Pee_\alpha$. Moreover, for any  $[T]^*\in\Vee^*$ there exists  $[P]^*\in\bigcup \Hee$ where $[P]^*\subseteq [T]^*$. 
\end{theorem}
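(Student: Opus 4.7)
The plan is to construct the $v$-matrix $\Hee = \{\Pee_\alpha : \alpha < \kappa(\prod X_i)\}$ by transfinite recursion inside the separative poset $(\Vee^*, \subseteq)$, following the Balcar--Pelant--Simon base matrix construction. To prepare the density (``moreover'') clause, I would first fix a shattering $v$-matrix $\{\Qee_\alpha : \alpha < \kappa(\prod X_i)\}$ of minimal cardinality, which exists by the very definition of $\kappa(\prod X_i)$, and weave its members into the recursion so that the final matrix $\Hee$ is itself shattering.

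Start with $\Pee_0 = \{\prod X_i\}$. At a successor stage $\alpha+1$, apply Lemma \ref{EEE} to the two-element matrix $\{\Pee_\alpha, \Qee_\alpha\}$ (legal since $\kappa(\prod X_i)$ is uncountable by Theorem \ref{tco3}) to obtain a common refinement $\Pee'_{\alpha+1}$; then for each block $[P]^* \in \Pee'_{\alpha+1}$ use Lemma \ref{continuum} to split $[P]^*$ into continuum many pairwise disjoint elements of $\Vee^*$ and extend this splitting to a maximal pairwise incompatible family below $[P]^*$. The union of these local maximal antichains is $\Pee_{\alpha+1}$, which strictly refines both $\Pee_\alpha$ and $\Qee_\alpha$. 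At a limit stage $\alpha < \kappa(\prod X_i)$, the matrix $\{\Pee_\beta : \beta < \alpha\}$ has cardinality $|\alpha| < \kappa(\prod X_i)$, so Lemma \ref{EEE} delivers a common refinement, which I take as $\Pee_\alpha$. The refinement chain $\Pee_\beta \prec \Pee_\alpha$ for $\alpha < \beta$ is then immediate from the construction.

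The main obstacle is the density clause: given $[T]^* \in \Vee^*$, exhibit some $[P]^* \in \bigcup \Hee$ with $[P]^* \subseteq [T]^*$. My approach is by contradiction. Assuming no such $[P]^*$ exists, I would use separativity of $(\Vee^*, \subseteq)$, verified in the paragraph preceding Lemma \ref{EEE}, to show that for each $\alpha$ the family $\Ree_\alpha = \{[P]^* \cap [T]^* : [P]^* \in \Pee_\alpha \text{ compatible with } [T]^*\}$ is a $v$-partition of $[T]^*$ inside the isomorphic copy of $(\Vee^*, \subseteq)$ sitting below $[T]^*$ via the bijections $\Phi, \Psi$ preceding Lemma \ref{EEE}. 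The matrix $\{\Ree_\alpha\}$ inherits the descending refinement structure, and since $\{\Qee_\alpha\}$ was shattering the induced matrix is shattering in the lower copy as well. Transporting the density portion of the Balcar--Pelant--Simon argument \cite{bps} through this isomorphism, the strict splitting built in at every successor stage eventually forces some $\Ree_\alpha$-element, hence some element of $\bigcup \Hee$, to sit below $[T]^*$, contradicting the assumption. This transfer-and-contradict step is where I expect the greatest care to be needed; the rest of the proof is bookkeeping and direct applications of Lemmas \ref{continuum}, \ref{EEE} and Theorem \ref{tco3}.
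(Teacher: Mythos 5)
The paper itself disposes of this theorem by citation to Lemma 2.11 of \cite{bps} (equivalently Theorem 3.4 of \cite{bs}), and your construction skeleton is the right one and the intended one: fix a shattering $v$-matrix of minimal cardinality, recurse through $\kappa(\prod X_i)$ using Lemma \ref{EEE} at limits and successors (legitimate by the regularity and uncountability of $\kappa(\prod X_i)$ from Theorem \ref{tco3}), split blocks via Lemma \ref{continuum}, and localize below a given $[T]^*$ via the isomorphism $\Phi^*$. Up to and including the refinement chain $\Pee_\beta\prec\Pee_\alpha$, your argument is fine.

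The gap is the density clause, which is the entire content of the theorem beyond Lemma \ref{EEE}, and the mechanism you propose for it does not work. Assuming no member of $\bigcup\Hee$ lies below $[T]^*$, you obtain that the traces $\Ree_\alpha=\{[P]^*\cap[T]^*: [P]^*\in\Pee_\alpha \mbox{ compatible with } [T]^*\}$ form a decreasing shattering matrix in the poset below $[T]^*$. But that poset is isomorphic to $(\Vee^*,\subseteq)$, so its shattering number is again $\kappa(\prod X_i)$, and a decreasing shattering matrix of exactly that length below $[T]^*$ is perfectly consistent --- no cardinality, regularity, or separativity violation appears, and ``strict splitting at every successor stage'' cannot manufacture one: splitting each block into continuum many pieces says nothing about whether any piece ever falls inside $[T]^*$. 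The actual combinatorial core of \cite{bps} that you are missing is the counting argument: let $n_\alpha$ be the number of members of $\Pee_\alpha$ compatible with $[T]^*$; this is non-decreasing, and if it were eventually a constant finite value one gets, for each surviving block, a tower of traces on $[T]^*$ along which only one member of each later partition remains compatible, contradicting the shattering property; hence $n_\alpha$ is infinite from some level on, by regularity of $\kappa(\prod X_i)$. From countably many almost disjoint compatible pieces one then diagonalizes inside $[T]^*$ to produce an element of $\Vee^*$ contained in $[T]^*$ and lying below a single block of $\Pee_\alpha$, and the successor steps of the recursion must be arranged so that these diagonal partitions are among the ones being refined. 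None of this bookkeeping is present in your sketch, so as written the ``moreover'' clause is asserted rather than proved.
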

\begin{proof} A proof is completely analogous to the proof of Lemma 2.11 in  \cite{bps}, or to the proof of Theorem 3.4  in \cite{bs}.
\end{proof}
\section{Cardinal invariants}
In this part results hold for  the $\sigma$-ideal $d^0(\Vee^*)$.  One can check $d^0(\Vee^*)\subseteq d^0(\Vee)$ similarly as in the proof of Corollary \ref{rownosc}. We do not know whether $d^0(\Vee^*) = d^0(\Vee)$. To obtain results for the ideal $d^0(\Vee)$ we have to     assume that sets $X_i$  are finite.

\begin{lemma}\label{CCC} If  $\Pee$ is a  $v$-partition, then the complement of the union   $\bigcup \Pee$ belongs to  $d^0(\Vee^*)$.
\end{lemma}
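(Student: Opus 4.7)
The claim unwinds directly from the definitions. Recall that $Y \in d^0(\Vee^*)$ means $Y \subseteq \bigcup \Vee^* = \prod X_i$ together with the condition that for every $[T]^* \in \Vee^*$ some $[P]^* \in \Vee^*$ satisfies $[P]^* \subseteq [T]^*$ and $[P]^* \cap Y = \emptyset$. Since clearly $\prod X_i \setminus \bigcup \Pee \subseteq \prod X_i$, the plan is just to produce, for an arbitrary $[T]^*\in\Vee^*$, a refinement $[P]^* \subseteq [T]^*$ that sits entirely inside $\bigcup \Pee$.

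The key step is maximality. Fix $[T]^* \in \Vee^*$. If $[T]^*$ were incompatible with every member of $\Pee$, then $\Pee \cup \{[T]^*\}$ would still be a family of pairwise incompatible elements of $\Vee^*$ (note $[T]^*\notin \Pee$ in that case, because $[T]^*$ is compatible with itself), contradicting the maximality of $\Pee$ as a $v$-partition. Hence there exists $[Q]^* \in \Pee$ compatible with $[T]^*$, which by the definition of compatibility means that the intersection $[T]^* \cap [Q]^*$ contains some $[P]^* \in \Vee^*$.

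This $[P]^*$ is the required witness: $[P]^* \subseteq [T]^*$, and since $[P]^* \subseteq [Q]^* \subseteq \bigcup \Pee$, we get $[P]^* \cap (\prod X_i \setminus \bigcup \Pee) = \emptyset$. This shows $\prod X_i \setminus \bigcup \Pee \in d^0(\Vee^*)$.

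There is no serious obstacle here; the only point that requires a moment's care is invoking the right notion of compatibility (the paper's convention is that $[Q]^*$ and $[T]^*$ are compatible iff $[Q]^* \cap [T]^*$ contains some element of $\Vee^*$, which is exactly the refinement we need). Everything else is a one-line application of maximality.
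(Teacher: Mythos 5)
Your proof is correct and follows essentially the same route as the paper's: use maximality of the $v$-partition to find a member of $\Pee$ compatible with the given $[T]^*$, and take the resulting element of $\Vee^*$ inside $[T]^*\cap\bigcup\Pee$ as the witness. The only cosmetic difference is that you spell out the maximality step and work with an element of $\Vee^*$ contained in the intersection, whereas the paper takes the intersection itself to be in $\Vee^*$; these amount to the same thing.
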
 
\begin{proof}
 If $[T]^*\in\Vee^*$, then take $[P]^*\in\Pee$ such that $[P]^*\cap [T]^*\in\Vee^*$.  Since $[P]^*\cap [T]^*\subseteq \bigcup\Pee$ and $[P]^*\cap [T]^*\subseteq [T]^*$, we are done. 
  \end{proof}

\begin{lemma}\label{DDD} 
If   $S \in d^0(\Vee^*)$, then there exists a $v$-partition $\Pee$ such that $\bigcup \Pee\cap S = \emptyset$. 
\end{lemma}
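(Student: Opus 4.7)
The plan is to run the standard maximal-antichain argument inside the dense set of ``good'' conditions. Set
\[
\Fee = \{[P]^* \in \Vee^* : [P]^* \cap S = \emptyset\}.
\]
The assumption $S \in d^0(\Vee^*)$ is literally the statement that $\Fee$ is dense in the poset $(\Vee^*, \subseteq)$: every $[T]^* \in \Vee^*$ has a refinement lying in $\Fee$.

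Next I would apply the Kuratowski--Zorn lemma inside $\Fee$ to obtain a family $\Pee \subseteq \Fee$ which is a maximal antichain in $\Fee$ (i.e.\ a maximal set of pairwise incompatible members of $\Vee^*$ all disjoint from $S$). Then $\bigcup \Pee \cap S = \emptyset$ is automatic because each $[P]^* \in \Pee$ individually misses $S$. The remaining task is to verify that $\Pee$ is actually a $v$-partition, that is, maximal as an antichain in the full poset $\Vee^*$, not merely in $\Fee$.

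To show this, I would take an arbitrary $[T]^* \in \Vee^*$ and use density of $\Fee$ to pick $[Q]^* \subseteq [T]^*$ with $[Q]^* \in \Fee$. Maximality of $\Pee$ inside $\Fee$ forces $[Q]^*$ to be compatible with some $[P]^* \in \Pee$ (otherwise $\Pee \cup \{[Q]^*\}$ would be a strictly larger antichain in $\Fee$, a contradiction). Since $[Q]^* \subseteq [T]^*$, any witness of compatibility between $[Q]^*$ and $[P]^*$ is also a witness of compatibility between $[T]^*$ and $[P]^*$. Hence $[T]^*$ is compatible with some element of $\Pee$, and $\Pee$ is indeed a maximal antichain in $\Vee^*$.

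There is no substantive obstacle here; the proof is purely order-theoretic and does not require the fusion machinery or the finiteness of the $X_i$. The only point that needs a line of care is checking that the compatibility relation propagates from $[Q]^*$ up to $[T]^*$, which is immediate from the definition of ``incompatible'' recalled just before Lemma~\ref{EEE}: $[Q]^*$ and $[P]^*$ being compatible means $[Q]^* \cap [P]^*$ contains a member of $\Vee^*$, and this same member witnesses $[T]^* \cap [P]^* \in \Vee^*$.
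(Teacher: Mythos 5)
Your proof is correct and follows essentially the same route as the paper: the paper also observes that the conditions disjoint from $S$ form a dense subset of $(\Vee^*,\subseteq)$ and takes a maximal antichain inside it, merely stating this in one line where you spell out the Zorn's lemma step and the (correct) verification that maximality within the dense set yields maximality in all of $\Vee^*$.
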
 
\begin{proof}
For each $[T]^*\in\Vee^*$ fix  $[P]^*\in\Vee^*$ such that $[P]^*\subseteq [T]^*$ and $[P]^*\cap S =\emptyset $. Any  $v$-partition consisting of just fixed    $[P]^*$ is a desired one.
 \end{proof}

Observe that, if  $\Pee$ is a  $v$-partition and $S\subseteq \prod X_i$ is a selector of $\Pee$, i.e. $S \cap  [P]^*$ has exactly one point for each  $[P]^*\in \Pee$, then $S \in d^0(\Vee)$. Indeed, for any $[T]^*\in \Vee^*$, there exist $[P]^*\in\Pee$ such that $[T]^*\cap [P]^*\in\Vee^*$. Then $[T]^*\cap [P]^*\cap S$ has no more than one point. By Lemma \ref{continuum}, there exists $[Q]^*\in\Vee^*$ which is disjoint with $S$ and such that 
$[Q]^* \subseteq [T]^*$.

\begin{theorem} \label{15} $\kappa(\prod X_i) = add(d^0(\Vee^*))$.
\end{theorem}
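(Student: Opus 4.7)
The plan is to prove both inequalities separately, using the common refinement of Lemma \ref{EEE} in one direction and the refining matrix of Theorem \ref{12} in the other.

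For $\kappa(\prod X_i)\leq add(d^0(\Vee^*))$, I would take a witnessing family $\{S_\alpha:\alpha<\lambda\}\subseteq d^0(\Vee^*)$ with $\bigcup_\alpha S_\alpha\notin d^0(\Vee^*)$, and by Lemma \ref{DDD} attach to each $S_\alpha$ a $v$-partition $\Pee_\alpha$ whose union is disjoint from $S_\alpha$. Assuming $\lambda<\kappa(\prod X_i)$ for contradiction, Lemma \ref{EEE} produces a $v$-partition $\Pee$ refining every $\Pee_\alpha$. Fix $[T]^*\in\Vee^*$ witnessing $\bigcup S_\alpha\notin d^0(\Vee^*)$, use maximality to pick $[Q]^*\in\Pee$ compatible with $[T]^*$, and take $[R]^*\in\Vee^*$ with $[R]^*\subseteq [Q]^*\cap [T]^*$. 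Since $\Pee\prec\Pee_\alpha$, for every $\alpha$ there is a (necessarily unique, by pairwise incompatibility) $[P_\alpha]^*\in\Pee_\alpha$ with $[Q]^*\subseteq [P_\alpha]^*$, so $[R]^*\subseteq\bigcup\Pee_\alpha$ and hence $[R]^*\cap S_\alpha=\emptyset$. This exhibits $[R]^*\subseteq [T]^*$ in $\Vee^*$ disjoint from $\bigcup S_\alpha$, contradicting the choice of $[T]^*$.

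For $add(d^0(\Vee^*))\leq\kappa(\prod X_i)$, I would use the refining matrix $\Hee=\{\Pee_\alpha:\alpha<\kappa(\prod X_i)\}$ of Theorem \ref{12} and, for each $\alpha$, a selector $S_\alpha$ of $\Pee_\alpha$. Each $S_\alpha$ lies in $d^0(\Vee^*)$ by essentially the observation preceding this theorem: given $[T]^*\in\Vee^*$, pick $[P]^*\in\Pee_\alpha$ compatible with $[T]^*$, take a $\Vee^*$-element $[R']^*\subseteq [T]^*\cap [P]^*$, and then invoke Lemma \ref{continuum} to shrink $[R']^*$ further so as to avoid the at most countably many points of $S_\alpha$ it contains (the single point in $[P]^*$ together with at most countably many coming from the countable intersections $[R']^*\cap [P'']^*\subseteq [P]^*\cap [P'']^*$ over $[P'']^*\in\Pee_\alpha$ incompatible with $[P]^*$). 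To see $\bigcup_\alpha S_\alpha\notin d^0(\Vee^*)$, take $V=\prod X_i\in\Vee^*$ as the witness: for any $U\in\Vee^*$ the density clause of Theorem \ref{12} furnishes $\alpha$ and $[P]^*\in\Pee_\alpha$ with $[P]^*\subseteq U$, and the selected point of $S_\alpha$ lying in $[P]^*$ meets $U\cap\bigcup S_\alpha$.

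The main obstacle is choosing adequate witnesses in the second direction. The most natural guess $S_\alpha:=\prod X_i\setminus\bigcup\Pee_\alpha$ (which Lemma \ref{CCC} places in $d^0(\Vee^*)$) is too weak, because $U\subseteq\bigcup\Pee_\alpha$ does not force $U$ to lie inside a single $[P]^*\in\Pee_\alpha$, so such $U$'s can evade every complement and no contradiction emerges. Selectors succeed precisely because the density clause of Theorem \ref{12} delivers, for each $U$, a partition element actually contained in $U$, so the one chosen point in that element cannot be missed; the corresponding phenomenon in the first direction is captured by the common refinement of Lemma \ref{EEE}, which is what ultimately kills the hypothetical $\lambda<\kappa(\prod X_i)$.
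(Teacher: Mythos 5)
Your argument is correct, and the two halves compare differently with the paper. The inequality $\kappa(\prod X_i)\leq add(d^0(\Vee^*))$ is proved exactly as in the paper (Lemma \ref{DDD} to attach a $v$-partition to each set, Lemma \ref{EEE} for the common refinement); you merely inline the content of Lemma \ref{CCC} by exhibiting $[R]^*\subseteq[Q]^*\cap[T]^*$ directly instead of quoting that $\prod X_i\setminus\bigcup\Pee$ lies in the ideal and contains the union. For the reverse inequality your route is genuinely different and noticeably shorter: you take selectors $S_\alpha$ of the partitions $\Pee_\alpha$ of the base matrix from Theorem \ref{12} themselves, and let the density clause of that theorem (every $U\in\Vee^*$ contains a whole element of some $\Pee_\alpha$, hence a selected point) show that $\bigcup_\alpha S_\alpha$ meets every element of $\Vee^*$. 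The paper instead builds an auxiliary refining matrix $\{\Qee_\alpha\}$ in which each element of $\Qee_\alpha$ keeps a point outside $\bigcup\Qee_{\alpha+1}$, and uses the complements $\prod X_i\setminus\bigcup\Qee_\alpha$ as the witnessing family; this keeps the witnesses among the ``canonical'' ideal sets of Lemma \ref{CCC}, but it costs an extra transfinite construction and still has to invoke selectors (of $\Qee_\beta$) at successor stages, so you lose nothing by using selectors outright. Both arguments rest on the same observation, made just before the theorem, that a selector of a $v$-partition belongs to the ideal via Lemma \ref{continuum}; your parenthetical worry about countably many stray points of $S_\alpha$ inside $[R']^*$ is already absorbed by the paper's definition of selector ($S\cap[P]^*$ is exactly a singleton), and is in any case harmless since Lemma \ref{continuum} provides continuum many disjoint subelements. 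Your closing remark correctly identifies why the naive witnesses $\prod X_i\setminus\bigcup\Pee_\alpha$ fail and why density of the base matrix is the decisive ingredient.
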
  
\begin{proof} 
Let  $\Fee\subseteq d^0(\Vee^*)$ and $|\Fee| < \kappa(\prod X_i)$. By Lemma \ref{DDD}, for each $W \in \Fee$ choose   a $v$-partition $\Pee_W$ such that $\bigcup \Pee_W \cap W =\emptyset$. By Lemma \ref{EEE} there exists a $v$-partition $\Pee$ which refines each $v$-partition $\Pee_W$ for $W \in \Fee$. The  set $\prod X_i\setminus\bigcup \Pee$ is element of $d^0(\Vee^*)$ and contains $\bigcup\Fee$. Thus, we have showed that $add(d^0(\Vee^*))\geq \kappa(\prod X_i)$.

Let $\{\Pee_\alpha:  \alpha < \kappa(\prod X_i)\}$ be a $v$-matrix like in the Theorem \ref{12}. We will construct inductively $v$-matrix $\{\Qee_\alpha: \alpha<\kappa(\prod X_i)\}$ such that for any $\alpha<\kappa(\prod X_i)$, 
$\Qee_\alpha\prec\Pee_\alpha$   and
 if $V\in\Qee_\alpha$, then $V\setminus\bigcup\Qee_{\alpha+1}\neq\emptyset$.
 Suppose, that we have already defined a $v$-partition $\Qee_\beta$ and assume that $\alpha=\beta+1<\kappa(\prod X_i)$.  Fix a selector $S$ of $v$-partition $\Qee_\beta$. By Lemma \ref{DDD} there exists a $v$-partition $\Qee$ such that $\bigcup\Qee\cap S=\emptyset$. Let $\Qee_\alpha$ be any  $v$-partition which refines the $v$-partition $\Qee$ and  the $v$-partition $\Pee_\alpha$. The remaining inductive steps are obvious. 
For any $T\in\Vee$ the intersection $[T]^*\cap\bigcup\{\prod X_i\setminus\bigcup\Qee_\alpha: \alpha<\kappa(\prod X_i)\}$ is nonempty. Indeed, for any $T\in\Vee$ choose $\alpha<\kappa(\prod X_i)$ and $[P]^*\in\Qee_\alpha$  such that $[P]^*\subseteq [T]^*$. Thus $\emptyset\neq [P]^*\setminus\bigcup\Qee_{\alpha+1}\subseteq \bigcup\{\prod X_i\setminus\bigcup\Qee_\alpha: \alpha<\kappa(\prod X_i)\}.$ Therefore, the family $\{\prod X_i\setminus\bigcup\Qee_\alpha: \alpha<\kappa(\prod X_i)\}$ witnesses that $add(d^0(\Vee^*))\leq\kappa(\prod X_i)$.
\end{proof}

\begin{theorem}\label{FEE} If all sets $X_i$ are countable, then $\omega_1\leq \kappa(\prod X_i) = add(d^0(\Vee^*)) \leq cov(d^0(\Vee^*)) \leq  \cf (\frak c ).$
\end{theorem}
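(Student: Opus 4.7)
The chain splits into four pieces, three of which are essentially already available. The first inequality $\omega_1 \leq \kappa(\prod X_i)$ is immediate from Theorem \ref{tco3} (regular uncountable). The central equality $\kappa(\prod X_i) = add(d^0(\Vee^*))$ is exactly Theorem \ref{15}. The inequality $add(d^0(\Vee^*)) \leq cov(d^0(\Vee^*))$ is the standard fact for proper ideals: any cover $\Gee \subseteq d^0(\Vee^*)$ of $\prod X_i$ has union $\prod X_i$, which is not in $d^0(\Vee^*)$ (no $[P]^*\in \Vee^*$ can be disjoint from $\prod X_i$), whereas every union of fewer than $add(d^0(\Vee^*))$ members stays in the ideal; hence $|\Gee|\geq add(d^0(\Vee^*))$. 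First one should note that $\prod X_i = \bigcup d^0(\Vee^*)$: singletons lie in $d^0(\Vee^*)$ by Lemma \ref{continuum}, because among continuum many pairwise disjoint members of $\Vee^*$ inside any $[T]^*$ only one can meet a singleton.

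The new content is $cov(d^0(\Vee^*))\leq \cf(\mathfrak c)$. Since each $X_i$ is countable and at least one has more than one point, $|\prod X_i|=\mathfrak c$. I would enumerate $\prod X_i = \{x_\beta : \beta < \mathfrak c\}$, with $\mathfrak c$ treated as the initial ordinal of cardinality $\mathfrak c$, and pick an increasing sequence $(\lambda_\alpha)_{\alpha < \cf(\mathfrak c)}$ of ordinals below $\mathfrak c$ with supremum $\mathfrak c$. Setting $S_\alpha = \{x_\beta : \beta < \lambda_\alpha\}$ gives $|S_\alpha| < \mathfrak c$ for each $\alpha$ and $\bigcup_{\alpha < \cf(\mathfrak c)} S_\alpha = \prod X_i$. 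It then remains to verify that $S_\alpha \in d^0(\Vee^*)$ for every $\alpha$.

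Verifying $S_\alpha\in d^0(\Vee^*)$ is where Lemma \ref{continuum} does the real work. Given $[T]^*\in\Vee^*$, that lemma supplies a pairwise disjoint family $\{[T_\xi]^* : \xi < \mathfrak c\}$ of elements of $\Vee^*$ contained in $[T]^*$. Pairwise disjointness forces at most $|S_\alpha|$ of them to meet $S_\alpha$, and since $|S_\alpha| < \mathfrak c$, some $[T_\xi]^*\subseteq [T]^*$ is disjoint from $S_\alpha$. I do not expect a substantive obstacle here: the only delicate point is choosing a genuinely cofinal sequence $(\lambda_\alpha)$ of ordinals strictly below $\mathfrak c$ when $\mathfrak c$ is singular, but this is automatic from the definition of cofinality, and Lemma \ref{continuum} provides exactly the slack needed to dodge any subset of $\prod X_i$ of size less than $\mathfrak c$.
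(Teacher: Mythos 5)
Your proposal is correct and follows essentially the same route as the paper: the first three pieces are quoted from Theorem \ref{tco3}, Theorem \ref{15}, and the standard $add\leq cov$ argument, and the substantive inequality $cov(d^0(\Vee^*))\leq\cf(\frak c)$ is obtained exactly as in the paper, by covering $\prod X_i$ with $\cf(\frak c)$ many sets of size less than $\frak c$ and using Lemma \ref{continuum} to see that each such set lies in the ideal. Your write-up is in fact slightly more careful than the paper's, which states the small-set argument for $d^0(\Vee)$ while the theorem concerns $d^0(\Vee^*)$; your version works directly with $\Vee^*$, for which Lemma \ref{continuum} is stated.
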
 \begin{proof}

By Lemma \ref{continuum} each set $S\subseteq \prod X_i$ of the cardinality less than continuum belongs to $d^0(\Vee)$.
Choose sets $S_\alpha\subseteq \prod X_i\mbox{ for } \alpha < \cf(\frak c)$ such that $\prod X_i = \bigcup\{S_\alpha: \alpha < \cf(\frak c)\}$ and  $|S_\alpha| <\frak c.$ Thus $ cov(d^0(\Vee)) \leq \cf(\frak c)$.
  \end{proof}

If $\Pee$ is a  $v$-partition, then one can choose  subsets $N_C \subset C \in \Pee$  of the cardinality less than $\frak{c}$ such that  sets $A\setminus N_{A}$ and $B\setminus N_{B}$ are disjoint for any distinct members $A$, $B$ of $\Pee$.  One can do this by the induction using the fact that the intersection of any distinct members  of $\Pee$ is countable.
But, if  $\{\Pee_\alpha :\alpha<\kappa(\prod X_i)\}$ is a $v$-matrix like in Theorem \ref{12}, then put $M_C=\bigcup\{N_{C_\beta}: \beta\leq \alpha\}$ whenever  $C\in\Pee_\alpha$ and  $C\subseteq C_\beta\in \Pee_\beta$.
If all sets $X_i$ are countable, then sets $M_C$ have cardinalities less than the continuum, by Theorem \ref{FEE}.  Under  such assumptions, the family   $\{C\setminus M_C: C\in\bigcup\{\Pee_\alpha: \alpha<\kappa(\prod X_i)\}\}$
is called \textit{base matrix}.
Any base matrix consists of sets which are  either  disjoint or one is contained in the other. The topology on $\prod X_i$ generated by a base matrix is called \textit{matrix topology}. So, the family 
 \begin{center}
$\{\prod X_i\}\cup\{C\setminus M_C: C\in\bigcup\{\Pee_\alpha: \alpha<\kappa(\prod X_i)\}\}$ 
\end{center}
is a base for the matrix topology. 

\begin{lemma}\label{top} Suppose sets $X_i$ are countable.
A subset $Y\subset \prod X_i$ is nowhere dense with respect to a matrix topology if and only, if $Y\in d^0(\Vee^*)$.
\end{lemma}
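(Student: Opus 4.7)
The plan is to use the explicit base $\{C \setminus M_C : C \in \bigcup \Hee\} \cup \{\prod X_i\}$ for the matrix topology, together with Theorem \ref{12} (density of the base matrix), the cardinality bound $|M_C| < \frak c$ recalled just above, and Lemma \ref{continuum}.

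For the direction $Y \in d^0(\Vee^*) \Rightarrow Y$ nowhere dense, I will take an arbitrary nonempty basic open set $C \setminus M_C$ with $C = [P]^* \in \Vee^*$ and produce inside it a smaller basic open set disjoint from $Y$. First, the definition of $d^0(\Vee^*)$ yields $[Q]^* \subseteq [P]^*$ with $[Q]^* \cap Y = \emptyset$; then Theorem \ref{12} furnishes $C' \in \bigcup \Hee$ with $C' \subseteq [Q]^*$. The set $C' \setminus M_{C'}$ is nonempty because $|C'| = \frak c > |M_{C'}|$, disjoint from $Y$ because $C' \subseteq [Q]^*$, and contained in $C \setminus M_C$ once one verifies $M_C \subseteq M_{C'}$. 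This last inclusion holds because any uncountable $C' \subseteq C \in \Pee_\alpha$ has $C$ as its unique level-$\alpha$ ancestor in the refining matrix (pairwise incompatibility inside $\Pee_\alpha$ forces uniqueness), so the chain defining $M_{C'}$ passes through $C$ and absorbs the chain defining $M_C$.

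For the converse, I start with $Y$ nowhere dense and a given $[T]^* \in \Vee^*$, and must exhibit $[U]^* \subseteq [T]^*$ with $[U]^* \cap Y = \emptyset$. Theorem \ref{12} places some $C \in \bigcup \Hee$ inside $[T]^*$, so $C \setminus M_C$ is a nonempty basic open set; nowhere denseness of $Y$ then yields a basic open subset $C' \setminus M_{C'} \subseteq C \setminus M_C$ disjoint from $Y$. The decisive consequence is the cardinality estimate $|C' \cap Y| \leq |M_{C'}| < \frak c$. Lemma \ref{continuum} now supplies $\frak c$ pairwise disjoint members $\{[Q_\xi]^* : \xi < \frak c\}$ of $\Vee^*$ inside $C'$; since fewer than $\frak c$ of them can meet $C' \cap Y$, some $[Q_\xi]^*$ is entirely disjoint from $Y$ and serves as the required $[U]^*$. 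The step I expect to be the main obstacle is precisely this last bridge from topological disjointness to genuine $\Vee^*$-disjointness: nowhere denseness only guarantees that $Y$ misses an open set possibly porous by the discarded $M_{C'}$, and it is the interplay between the smallness $|M_{C'}| < \frak c$ and Lemma \ref{continuum} which salvages a full uncountable tree in $\Vee^*$ inside $[T]^*$ avoiding $Y$.
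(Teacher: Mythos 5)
Your proof is correct and follows essentially the same route as the paper's: both directions pass through the base matrix $\{C\setminus M_C\}$, use Theorem \ref{12} to descend from an arbitrary $[T]^*\in\Vee^*$ into the matrix, exploit the nesting $M_C\subseteq M_{C'}$ for $C'\subseteq C$, and use $|M_{C'}|<\frak c$ together with Lemma \ref{continuum} to recover a full member of $\Vee^*$ inside a porous basic open set. You merely make explicit two details the paper leaves implicit (why $M_C\subseteq M_{C'}$, and why $C\setminus M_C$ contains a member of $\Vee^*$), which is fine.
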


\begin{proof} Fix a base matrix $$\Bee =\{C\setminus M_C: C\in\bigcup\{\Pee_\alpha: \alpha<\kappa(\prod X_i)\}\}.$$ At first, let  $Y\subset \prod X_i$ be a nowhere dense with respect to the matrix topology. Fix a set $V\in\Vee^*$ and choose  $W\subseteq V$ such that $W\in\Bee$ and $W\cap Y= \emptyset$.  Since  $W= C\setminus M_C$, where $C\in \Vee^*$ and  $|M_{C}|<\frak{c}$, the set $W$ contains some $D\in \Vee^*$. Any such $D$ witnesses that $Y\in d^0(\Vee^*)$. On the other hand, let  $U$ be a non-empty open set. Choose  $V\in\bigcup\{\Pee_\alpha :\alpha<\kappa(\prod X_i)\}$ such that $V\setminus M_{V}\subseteq U$. If  $Y\in d^0(\Vee^*)$, then there exists  $W\subseteq V$ such that $W\cap Y=\emptyset$ and $W\in\bigcup\{\Pee_\alpha :\alpha<\kappa(\prod X_i)\}$. Since   $W\setminus M_{W}\subseteq V\setminus M_{V}\subseteq U$ we conclude that  $Y \cap W \setminus M_{W}=\emptyset$.
\end{proof} 

One can find the next theorem  in the paper by W. Kulpa and  A. Szymański \cite{ks}. It is presented with a proof in  \cite{bps}.

\textbf{Theorem}. 
\textit{Let $\Wee $ be a collection of families consisting of open subsets of a topological space $Y$. Suppose that:  
	 $\bigcup\Wee $ is a  $\pi$-base;
		 any family in $ \Wee$  consists of pairwise disjoint sets;
	 $|\Wee |<\tau $,  where $\tau$ is a regular cardinal number; 
	each set belonging to $\bigcup\Wee$ contains $\tau$ many pairwise disjoint open sets.
Then there exists an increasing family of nowhere dense subsets  $\{Y_\alpha: \alpha<\tau\}$ such that  $\bigcup\{Y_\alpha: \alpha<\tau\}=Y. $}

Thus, we can estimate $cov (d^0(\Vee))$, more accurate than these in \cite{kpw}. 

\begin{theorem}\label{final} If sets $X_i$ are countable, then
$$add(d^0(\Vee^*))\leq cov(d^0(\Vee^*))\leq add(d^0(\Vee^*))^+.$$
\end{theorem}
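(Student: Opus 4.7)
My plan is to prove the two inequalities separately. The first, $add(d^0(\Vee^*)) \leq cov(d^0(\Vee^*))$, is the standard inequality for any proper $\sigma$-ideal, and I would dispose of it in a sentence: by the remark after Theorem~\ref{*fuzja}, $d^0(\Vee^*)$ is a $\sigma$-ideal, and $\prod X_i \notin d^0(\Vee^*)$ (since no member of $\Vee^*$ is empty), so any subfamily of $d^0(\Vee^*)$ whose union is $\prod X_i$ must have cardinality at least $add(d^0(\Vee^*))$.

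For the inequality $cov(d^0(\Vee^*)) \leq add(d^0(\Vee^*))^+$, the strategy is to apply the Kulpa--Szyma\'nski theorem to the matrix topology on $\prod X_i$ with $\tau = add(d^0(\Vee^*))^+$. First I would dispose of the degenerate case $add(d^0(\Vee^*)) = \frak c$: Theorem~\ref{FEE} gives $cov(d^0(\Vee^*)) \leq \cf(\frak c) \leq \frak c = add(d^0(\Vee^*)) < \tau$, so the inequality is immediate. Under the remaining assumption $add(d^0(\Vee^*)) < \frak c$ one has $\tau \leq \frak c$, and $\tau$ is regular as a successor cardinal.

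To feed Kulpa--Szyma\'nski, I would take the refining $v$-matrix $\{\Pee_\alpha : \alpha < \kappa(\prod X_i)\}$ from Theorem~\ref{12} and set $\Wee_\alpha = \{C \setminus M_C : C \in \Pee_\alpha\}$, $\Wee = \{\Wee_\alpha : \alpha < \kappa(\prod X_i)\}$. Each $\Wee_\alpha$ is a pairwise disjoint family of basic open sets of the matrix topology, precisely because the $N_C$'s were chosen to enforce disjointness; and $\bigcup \Wee$ is the base matrix itself, hence a $\pi$-base. By Theorem~\ref{15}, $|\Wee| = \kappa(\prod X_i) = add(d^0(\Vee^*)) < \tau$. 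For the remaining hypothesis, fix $C \setminus M_C \in \bigcup \Wee$: Lemma~\ref{continuum} extracts $\frak c$ pairwise disjoint members $D_i \in \Vee^*$ contained in $C$, and Theorem~\ref{12} lets me shrink each $D_i$ to a basic open set $[P_i]^* \setminus M_{[P_i]^*}$. Since the $M$-operation grows monotonically along the $v$-matrix, $[P_i]^* \setminus M_{[P_i]^*} \subseteq C \setminus M_C$, producing $\frak c \geq \tau$ pairwise disjoint open subsets of $C \setminus M_C$.

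The Kulpa--Szyma\'nski theorem then produces an increasing family $\{Y_\alpha : \alpha < \tau\}$ of nowhere dense subsets of $\prod X_i$ whose union is $\prod X_i$, and Lemma~\ref{top} identifies each $Y_\alpha$ as a member of $d^0(\Vee^*)$, delivering $cov(d^0(\Vee^*)) \leq \tau = add(d^0(\Vee^*))^+$. The step I expect to be the main obstacle is verifying the ``$\tau$ many pairwise disjoint open sets'' hypothesis: the pieces supplied by Lemma~\ref{continuum} live in $\Vee^*$ rather than in the base matrix, so one has to refine them to actual basic open sets while preserving inclusion in $C \setminus M_C$; the monotonicity of the $M$-operation along the $v$-matrix, combined with the reduction $\tau \leq \frak c$, is what makes this refinement work.
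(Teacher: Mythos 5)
Your proposal is correct and follows essentially the same route as the paper: split off the case $add(d^0(\Vee^*))=\kappa(\prod X_i)=\frak c$, and otherwise apply the Kulpa--Szyma\'nski theorem to the base matrix topology with $\tau=\kappa(\prod X_i)^+$, using Lemma \ref{top} to translate nowhere dense sets into members of $d^0(\Vee^*)$ and Theorem \ref{15} to identify $\kappa(\prod X_i)$ with $add(d^0(\Vee^*))$. Your explicit verification of the ``$\tau$ many pairwise disjoint open sets'' hypothesis via Lemma \ref{continuum} and the monotonicity of the sets $M_C$ is a welcome elaboration of a step the paper only sketches.
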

\begin{proof}
Obviously, if  $\kappa(\prod X_i)=\frak{c}$, then $\frak{c}=add(d^0(\Vee^*))= cov(d^0(\Vee^*))$. Suppose that  $\kappa(\prod X_i)<\frak{c}$, them the  above theorem, i.e. the theorem by Kulpy and Szymański, works. Let $\Wee$ be a base matrix. Put   $\tau = \kappa(\prod X_i)^+$. Then  $\bigcup\Wee $ is a  $\pi$-base for the  topology generated by itself on $\prod X_i$. Each  $V\in \bigcup\Wee$ contains  $\frak{c}$-many elements of $\bigcup\Wee$.  By Corollary  \ref{rownosc} and Lemma \ref{top} and the theorem by Kulpy and Szymański we obtain   $cov(d^0(\Vee^*))\leq \kappa(\prod X_i)^+$. We are done, since Theorem  \ref{15}.
\end{proof}

Thus, if sets $X_i$ are countable, then $cov(d^0(\Vee))\leq cov(d^0(\Vee^*)).$ If sets $X_i$ are finite, then $$add(d^0(\Vee))\leq cov(d^0(\Vee))\leq add(d^0(\Vee))^+.$$

\end{document}